\numberwithin{equation}{section}
   \let\accent@spacefactor\relax
\setlist[enumerate,1]{label={\rm(\arabic*)}, ref={\rm\arabic*}}
\newtheorem{theo}{Theorem}[section]
\newtheorem*{theo*}{Theorem}
\newtheorem{pro}[theo]{Proposition}
\newtheorem{cor}[theo]{Corollary}
\theoremstyle{definition}
\newtheorem{defi}[theo]{Definition}
\theoremstyle{remark}
\newtheorem*{question}{Question}
\newtheorem{exa}[theo]{Example}
\newtheorem{remark}[theo]{Remark}
\def\cA{{\mathcal A}}
\def\TA{{\mathcal T}_{\mathcal A}}
\def\T{{\mathcal T}}
\def\C{{\mathbb C}}
\def\cO{{\mathcal O}}
\def\HH{{\rm H}}
\def\hh{{\rm h}}
\def\P{{\mathbb P}}
\def\PD{\check{\mathbb P}}
\def\F{{\mathbb F}}
\DeclareMathOperator{\sing}{Sing}
\DeclareMathOperator{\Supp}{Supp}
\DeclareMathOperator{\Tr}{Tr}
\DeclareMathOperator{\rem}{rem}
\newcommand{\Mod}[1]{\ (\mathrm{mod}\ #1)}
\newcommand{\supth}[1]{\ensuremath{#1^{\mathrm{th}}}}
\title{Triangular arrangements on the projective plane}
\author{Simone Marchesi}
\address{Universitat de Barcelona, 
Departament de Matem\`{a}tiques i Inform\`{a}tica, 
 Gran Via de les Corts Catalanes, 585, 
 08007 Barcelona, Spain}
\email{marchesi@ub.edu}
\author{Jean Vall\`es}
\address{Universit\'e de Pau et des Pays de l'Adour, 
  LMAP-UMR CNRS 5142, 
  Avenue de l'Universit\'e - BP 1155.
  64013 Pau Cedex, France}
\email{jean.valles@univ-pau.fr}
\begin{document}


\maketitle

\begin{prelims}

\DisplayAbstractInEnglish

\bigskip

\DisplayKeyWords

\medskip

\DisplayMSCclass

\end{prelims}


\newpage

\setcounter{tocdepth}{1}

\tableofcontents


\section{Introduction}
A line arrangement $\cA=\{l_1,\ldots, l_n\}$ in $\P^2$ is a finite set of distinct lines.  The union of these lines forms a divisor 
defined by an equation $f=\prod_i f_i=0$, where $f_i=0$ is the equation defining $l_i$. The sheaf $\T_{\cA}$ of vector fields 
tangent to this arrangement can be
defined as the kernel of the Jacobian map, which means
\[\xymatrix@C=16pt{
  0\ar[r]&\T_{\mathcal{A}}\ar[r]&\cO_{\P^2}^3 \ar[r]^-{\nabla f} & \mathcal{J}_f(n-1)\ar[r]&0,}
\]
where $\mathcal{J}_f$ is the ideal sheaf generated by the three partial derivatives $\nabla f=(\partial_x f,\partial_y f, \partial_z f)$. This ideal, called the\, \textit{Jacobian ideal}, defines the Jacobian scheme supported by the singular points of the arrangement; for instance, when $\cA$ is generic (\textit{i.e.}~it consists of $n$ lines in general position), $\mathcal{J}_f$ defines $\binom{n}{2}$ distinct points.  The sheaf $\T_{\cA}$, which is a reflexive sheaf over $\P^2$ and therefore a vector bundle, is a basic tool to study the link between the geometry, the topology and the combinatorics of $\cA$.

When $\T_{\cA}=\cO_{\P^2}(-\alpha)\oplus \cO_{\P^2}(-\beta)$, the arrangement $\cA$ is called a \textit{free arrangement}, and the integers $(\alpha,\beta)$ are called its exponents. This notion of freeness was first introduced by Saito \cite{S} for a reduced divisor and by Terao \cite{T} for hyperplane arrangements.

In \cite{OT} Terao conjectures that freeness depends only on the combinatorics of $\cA$, where the combinatorics is described by the intersection lattice $L(\cA)$, that is, the set of all the intersections of lines in $\cA$ (see Section~\ref{sec-RUA}).  More precisely, if two arrangements $\cA_0$ and $\cA_1$ have the same combinatorics (a bijection between $L(\cA_0)$ and $L(\cA_1)$) and one of them is free, then the other one is also free (of course with the same exponents).  This conjecture, despite all the efforts, has been proved, for line arrangements, only up to $13$ lines (see \cite{DIM}).

A weaker problem concerns the weak combinatorics (see Section~\ref{sec-weak}).
The weak combinatorics of a given arrangement of $n$ lines is given by the knowledge of the integers $t_i$, $i\ge 2$, which are the numbers of points with multiplicity exactly  $i$ of the arrangement. Let us mention 
the following formula, proved by Hirzebruch in \cite{Hirzebruch}, involving these numbers (when $t_n=t_{n-1}=t_{n-2}=0$):
 \begin{equation}
  \mbox{$t_2+t_3\ge n+\sum_{i\ge 1}it_{i+4}. $}    
  \end{equation}

In this paper we consider \textit{triangular arrangements}, meaning that there exist three non-aligned points such that every line of the arrangement passes through one of 
these points; for instance, the reflection arrangements described in \cite{OT} and again in \cite{Hirzebruch} belong to this family.

The paper is organized in the following way.

In Section~\ref{sec-innerpt} we will prove preliminary results on triangular arrangements and introduce the concept of a \textit{Roots-of-Unity-Arrangement} (RUA), which form, in our opinion, a central family to be studied in the context of arrangements, as confirmed by the following results.

Indeed, in Section~\ref{sec-RUA} we prove a key result, which states that for any triangular arrangement, it is possible to find a RUA  with exactly the same combinatorics.

  Combinatorial conditions proving freeness for a RUA are given in Section~\ref{sec-family}.  The main idea is to study together with a RUA $\cA$, obtained by removing inner lines from a free RUA (more precisely, from the so-called full monomial arrangement $\cA_3^3(N)$ introduced in \cite[Proposition 6.77]{OT}), its \textit{complementary arrangement} $\cA^{c}$ consisting of the removed lines. This is still a triangular arrangement, and its set of inner triple points (intersections of three lines, one from each vertex) will be denoted by $T_{\rem}$.  When this set is empty or when it reaches particular values (see Theorem~\ref{Trem-+}), the arrangement $\cA$ will be free. As a direct consequence, we describe explicitly a free \textit{equilateral} RUA (where by equilateral we mean that it has the same number of lines passing through each vertex) for any possible pair of exponents (see Corollary ~\ref{equilateral}).

 In Section~\ref{sec-novertex} we study freeness for triangular arrangement when one or two sides of the triangle (the one defined by three non-aligned points through which all lines pass) are removed. Contrarily to the previous section, few pairs of exponents are possible. They are obtained when the set of inner triple points (intersections of three lines of the arrangement that are not sides) is a complete intersection.

In Section~\ref{sec-weak}, we show that Terao's conjecture does not extend to the assumption of weak combinatorics. Indeed, we prove the following.

\begin{theo*}
 There exist pairs of arrangements possessing the same weak combinatorics such that one is free and the other is not.
\end{theo*}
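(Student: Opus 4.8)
The plan is to exhibit an explicit pair of triangular arrangements --- in fact of Roots-of-Unity-Arrangements --- sharing the same numbers $t_i$ but differing in freeness, and to certify both claims by means of the combinatorial characterization of freeness for RUAs established in Sections \ref{sec-family} and \ref{sec-novertex}. I would open with the elementary remark that the Chern classes of $\T_\cA$, hence the exponents whenever $\cA$ is free, are already determined by the weak combinatorics: the exact sequence defining $\T_\cA$ gives $c_1(\T_\cA)=-(n-1)$ and $c_2(\T_\cA)=(n-1)^2-\sum_{i\ge 2}t_i(i-1)^2$, so a free arrangement with weak combinatorics $(n;t_2,t_3,\dots)$ must have exponents $(a,b)$ with $a+b=n-1$ and $ab=c_2(\T_\cA)$. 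This both makes the statement nontrivial and pins down the target: a collision on the data $(n;t_i)$ between a free RUA and a non-free one, necessarily with distinct intersection lattices, since freeness is combinatorial within this family.

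For the construction I would fix the three vertices $P_1,P_2,P_3$ of the triangle, the numbers $a_1,a_2,a_3$ of lines through each of them (with $n=a_1+a_2+a_3$ and no line through two vertices, to keep the bookkeeping transparent), and a prescribed number of extra concurrence points, each lying on one line of each of the three pencils. Any two incidence patterns $\mathcal L_0,\mathcal L_1$ using the same number of such triple points yield arrangements with the same weak combinatorics: the three vertices contribute fixed $t_{a_1},t_{a_2},t_{a_3}$, the prescribed concurrences contribute a fixed $t_3$, and all remaining inter-pencil intersections are ordinary nodes whose number is then also fixed. I would choose $\mathcal L_0$ so that the arrangement acquires the structure the criterion of Sections \ref{sec-family}--\ref{sec-novertex} recognizes as free (for instance with one pencil playing the role of a modular pencil), and $\mathcal L_1$ so that the very same count of triple points is arranged to violate the numeric condition of that criterion. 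By the realization theorem of Section \ref{sec-RUA}, for $N$ large both combinatorial types are realized by RUAs $\cA_0,\cA_1\subset\cA^3_3(N)$.

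It then remains to verify three things: (i) $\cA_0$ and $\cA_1$ have literally the same $t_i$ --- a finite count as above; (ii) $\cA_0$ is free --- checked directly against the criterion of Sections \ref{sec-family}--\ref{sec-novertex}, or by exhibiting a basis of $\T_{\cA_0}$; (iii) $\cA_1$ is not free. Step (iii) is the real obstacle, because failure of a merely sufficient freeness condition would prove nothing: one must use that the result of Sections \ref{sec-family}--\ref{sec-novertex} is a genuine characterization, so that the violated numeric condition forces non-freeness. As a cross-check one can compute, in this very structured arrangement, the minimal twist $d$ with $H^0(\T_{\cA_1}(d))\ne 0$ and observe that it is incompatible with the exponents $(a,b)$ pinned down above by $a+b=n-1$, $ab=c_2(\T_{\cA_1})$, which already rules out a splitting $\T_{\cA_1}=\cO_{\P^2}(-a)\oplus\cO_{\P^2}(-b)$. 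The entire difficulty is thus concentrated in engineering the example --- producing a genuine weak-combinatorics collision between a free and a non-free RUA and then honestly certifying the non-freeness --- rather than in any individual computation.
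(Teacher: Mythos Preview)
Your proposal is a plan rather than a proof: the statement is an existence claim, so one must actually exhibit a concrete pair $(\cA_0,\cA_1)$ and verify the three items (i)--(iii) you list. You never do this; you only describe how you would search. The paper, by contrast, writes down two explicit arrangements in $\mathrm{Tr}(5,5,5)$ (one obtained from $\cA_3^3(6)$ by deleting six lines, the other from $\cA_3^3(5)$ by deleting three lines), checks that both have $t_3=12$, $t_6=3$ and $t_i=0$ otherwise, and then certifies freeness and non-freeness by producing the relevant syzygies directly (the degree-$2$ syzygy for $\cA_0$ has empty zero locus, while the degree-$1$ syzygy for $\cA_1$ vanishes at $(1,1,1)$, forcing $\cA_1$ to be only nearly free).

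More seriously, the simplification you make --- taking ``no line through two vertices'' --- is not innocent: it lands you precisely in the regime of Theorem~\ref{uncomplete}(3), where a triangular arrangement with all three sides removed is free \emph{if and only if} $a=b=c$ and the set $T$ of inner triple points is the complete intersection of the two pencils, i.e.\ $|T|$ attains its maximum value $(a-1)^2$. In your setup the weak combinatorics records exactly the triple $(a_1,a_2,a_3)$ (via the three high-multiplicity points) and $|T|$ (via $t_3$); so for side-free triangular arrangements freeness is already determined by the weak combinatorics, and no pair of the kind you seek exists there. The paper's examples necessarily keep all three side lines: with the sides present, Theorem~\ref{teo-carfree} shows that freeness depends on \emph{how} the $|T|$ inner triple points are distributed along the lines (the partitions the paper displays), which is lattice data not captured by the $t_i$. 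That is exactly what allows two arrangements with the same $|T|=12$ to differ in freeness. If you want to salvage your outline, drop the ``no sides'' hypothesis, work in $\mathrm{Tr}(a,b,c)$ with the sides included, and then either invoke Theorem~\ref{teo-carfree} honestly on a concrete pair or, as the paper does, check the syzygies by hand.
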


Finally, in Section~\ref{sec-Terao} we discuss Terao's conjecture for triangular arrangements in general.

\section{The inner triple points of a triangular arrangement}\label{sec-innerpt}
In this section we will explain the importance of the set $T$ of the triple points, defined by the triangular arrangement, which are not the vertices of the triangle. 
We will describe in particular the case where $T$ is either empty or a complete intersection. 

Let $A,B,C$ be three non-aligned points. A line arrangement such that any of its lines passes through $A,B$ or $C$ is called \textit{triangular arrangement}.  The lines $(AB)$, $(AC)$ and $(BC)$, sides of the triangle $ABC$, are called \textit{side lines}. If one of the three side lines is missing, we will say that the triangular arrangement is incomplete. The set of triangular arrangements consisting of $a+1$ lines through $A$, $b+1$ lines through $B$, $c+1$ lines through $C$, where $0\le a\le b\le c$, including the three side lines, will be denoted by $\Tr(a,b,c)$; these arrangements possess $a+b+c$ lines.

\begin{defi}
 Let $A$, $B$ and $C$ be three non-aligned points in $\P^2$. Let us consider a triangular arrangement $\cA$ with respect to these three points.  A singular point of $\cA$ which appears as an intersection of one line through $A$, one line through $B$ and one line through $C$ is called an \textit{inner triple point} of $\cA$. The set of these inner triple points of $\cA$ is denoted by $T$. Moreover, we call any line of $\cA$ which is not a side one an \textit{inner line} of the arrangement.
\end{defi}

As we will perceive throughout this work, it will be extremely important to relate two arrangements which differ by one line (and their associated reflexive sheaves). Such a relation is explained by what we will refer to as the \textit{Addition-Deletion theorem} (or simply Addition-Deletion). Even if historically such name is reserved for the result that relates the freeness of the arrangements in a triple (see \cite[Theorem 4.51]{OT} for more details), we will use it to recall the exact sequences we now describe. Nevertheless, the mentioned relations for the freeness are also obtained  using such sequences.

Let $\cA$ be a line arrangement in $\P^2$, and consider a line $l \in \cA$. Denoting by $\cA \backslash l$ the arrangement obtained by removing $l$, we have the  short exact sequence
\begin{equation}\label{seq-deletion}
  \xymatrix@C=16pt{
    0\ar[r]&\T_{\mathcal{A}} \ar[r]&\T_{\mathcal{A}\setminus l}\ar[r]&\cO_l(-t) \ar[r]&0,
    }
\end{equation}
where $t$ denotes the number of triple points in the line $l$ counted with appropriate multiplicity (see \cite[Proposition 5.1]{FV} for more details). This process is known as \textit{deletion}.

By \cite[Proposition 5.2]{FV}, if $\T_{\mathcal{A}} \simeq \cO_{\P^2}(-\alpha)\oplus \cO_{\P^2}(-\beta)$ and $t = \alpha-1$ or $t=\beta-1$, then $\T_{\mathcal{A}\setminus l}$ is free as well, with exponents, respectively, $(\alpha-1,\beta)$ or $(\alpha,\beta-1)$.

Let us now consider  the addition of a line $l$ to an arrangement $\cA$ of $d$ lines. We also have  a short exact sequence
\begin{equation}\label{seq-deletion2}
\xymatrix@C=16pt{
  0\ar[r]&\T_{\mathcal{A}\cup l}\ar[r]&\T_{\mathcal{A}}\ar[r]&\cO_l(-t)\ar[r]&0,
}
\end{equation}
where $t$ denotes the number of triple points of the arrangement $\mathcal{A}\cup l$ in the line $l$ (see again \cite[Proposition 5.1]{FV} for more details).
Considering the dual of the sequence (\ref{seq-deletion2}), we get
$$
\xymatrix@C=16pt{
  0\ar[r]&\T_{\mathcal{A}}(-1)\ar[r]&\T_{\mathcal{A}\cup l}\ar[r]&\cO_l(t+1-d) \ar[r]&0.}
$$
This process is known as \textit{addition}.

Again by \cite[Proposition 5.2]{FV}, if $\T_{\mathcal{A}} \simeq \cO_{\P^2}(-\alpha)\oplus \cO_{\P^2}(-\beta)$ and $t = \alpha$ or $t=\beta$, then $\T_{\mathcal{A}\cup l}$ is free as well, with exponents, respectively, $(\alpha,\beta+1)$ or $(\alpha+1,\beta)$.

\begin{pro}\label{def-TA-T} Let $\mathcal{A}\in \Tr(a,b,c)$; then,
there is an exact sequence 
$$
\xymatrix@C=16pt{
  0\ar[r]& \T_{\mathcal{A}}\ar[r]&\cO_{\P^2}(-a)\oplus \cO_{\P^2}(-b) \oplus \cO_{\P^2}(-c)\ar[r]^-{\psi} &\mathcal{I}_T(-1)\ar[r]&0,}
$$
where $\mathcal{I}_T$ is the ideal sheaf of the smooth finite set $T$ of  \textit{inner} triple points (\textit{i.e.}~$A,B,C\notin T$).

Moreover, if we choose  the points $A=(1:0:0)$, $B=(0:1:0)$ and $C=(0:0:1)$, then $\mathcal{A}\in \Tr(a,b,c)$ is defined by an equation of type
$$
xyz\prod_{i=1}^{a-1}(x-\alpha_i y)\prod_{j=1}^{b-1}(y-\beta_j z)\prod_{k=1}^{c-1}(x-\gamma_k z)=0
$$
with $\alpha_i, \beta_j, \gamma_k \in\C$. Then, after a projective linear transformation if necessary, we have
$$
\psi = \left[\prod_{i=1}^{a-1}(x-\alpha_i y),\prod_{j=1}^{b-1}(y-\beta_j z),\prod_{k=1}^{c-1}(x-\gamma_k z)\right].
$$ 
\end{pro}

\begin{proof}
Let $Z\subset \PD^2$ be the finite set of points corresponding by projective duality to the lines of $\cA$.  Let $\Delta$ be the union of the three lines $L_A,L_B,L_C$ in $\PD^2$ that are dual lines of the three points $A$, $B$ and $C$.  Since $Z$ is contained in $\Delta$, we have the following exact sequence:
$$
\xymatrix@C=16pt{
  0\ar[r]& \cO_{\P^2}(-2) \ar[r]& \mathcal{I}_Z(1) \ar[r]& \mathcal{I}_{Z}/\mathcal{I}_{\Delta}(1)   \ar[r]&0.}
$$
The vertices of $\Delta$ belong to $Z$, which implies that
$$
\mathcal{I}_{Z}/\mathcal{I}_{\Delta}(1) =\cO_{L_A}(-a)\oplus \cO_{L_B}(-b)\oplus \cO_{L_C}(-c).
$$
Let us consider the incidence variety $\F=\{(x,l)\in \P^2\times \PD^2 \mid x\in l\}$ and the projection maps $p\colon \F \rightarrow \P^2$ and $q\colon \F \rightarrow \PD^2$. According to \cite[Theorem 1.3]{FMV}, we have  $\TA=p_*q^*(\mathcal{I}_Z(1))$, and the Fourier--Mukai transform $p_*q^{*}$ applied to the above exact sequence gives
$$
 \xymatrix{
   0\ar[r]&\T_{\mathcal{A}} \ar[r]& \cO_{\P^2}(-a)\oplus \cO_{\P^2}(-b) \oplus \cO_{\P^2}(-c)\ar[r]&    \cO_{\P^2}(-1)\ar[d]\\
   & 0&  \mathrm{R}^1p_*q^{*}\cO_{L_A}(-a)\oplus  \mathrm{R}^1p_*q^{*}\cO_{L_B}(-b)\oplus  \mathrm{R}^1p_*q^{*}\cO_{L_C}(-c) \ar[l]&  \mathrm{R}^1p_*q^{*}\mathcal{I}_Z(1)\rlap{.}\ar[l]}
 $$
    The sheaf $\mathrm{R}^1p_*q^{*}\mathcal{I}_Z(1)$ is supported on the scheme of triple points defined by $\mathcal{A}$, while the last sheaf of the sequence is supported on the vertices of the triangle $(ABC)$. Therefore, the kernel of the last map is the structural sheaf of the set of triple inner points $T$. This implies that we have the following exact sequence: 
    $$
 \xymatrix@C=16pt{
   0\ar[r]& \T_{\mathcal{A}} \ar[r]& \cO_{\P^2}(-a)\oplus \cO_{\P^2}(-b) \oplus \cO_{\P^2}(-c) \ar[r]^-{\psi} &  \mathcal{I}_T(-1) \ar[r]& 0.
   }$$
       
We will now prove, by induction, the explicit description of the map $\psi$.
       
    Let us consider a line arrangement $\cA \in \Tr(2,2,2)$. Then, one can assume that its defining equation is 
    $xyz(x-z)(y-\beta z)(x-\gamma y)=0$. Two different cases appear:
    \begin{itemize}
    \item The three inner lines are not concurrent, \textit{i.e.}~$\beta \gamma\neq 1$.
     \item The three inner lines are  concurrent in a point $p$, \textit{i.e.}~$\beta \gamma=1$.
    \end{itemize}
In the first case, we have 
$$
\xymatrix@C=16pt{
   0\ar[r]& \T_{\mathcal{A}}\ar[r]& \cO_{\P^2}(-2)^3 \ar[r]^-{\psi}&  \cO_{\P^2}(-1)\ar[r]& 0,}
$$
    and therefore $ \T_{\mathcal{A}}\simeq T_{\P^2}(-4)$.
In the second case, we have 
$$
\xymatrix@C=16pt{
   0\ar[r]& \T_{\mathcal{A}} \ar[r]& \cO_{\P^2}(-2)^3 \ar[r]^-{\psi}&  \mathcal{I}_p(-1) \ar[r]&0,}
$$
and therefore $ \T_{\mathcal{A}}\simeq   \cO_{\P^2}(-3) \oplus \cO_{\P^2}(-2)$.

 It is clear in both cases that, after taking a linear combination of the columns of the defining matrix, we have $\psi=[x-z,y-\beta z,x-\gamma y]$.

 Let us now assume  that one can write $\psi$ in the described form for any triple $(a,b,c)$ such that $2\le a \le b \le c$ and $a+b+c\le n$, and let us show the result for a line arrangement $\cA \in \Tr(a,b,c)$ with $a+b+c=n+1$.
 
Let us remove a line $l=\{y=\gamma x\}$ passing through the point $C$, and let us denote by $Z$ the $k$ inner triple points through which the line $l$ passes.  Then this line contains $c+k-1$ triple points counted with multiplicities, more precisely, $k$ inner triple points of $Z$ and the point $C$, at which $c+1$ lines meet, counting for $c-1$ triple points.  By Addition-Deletion, we have an exact sequence
$$
\xymatrix@C=16pt{
  0\ar[r]&\T_{\mathcal{A}}\ar[r]& \T_{\mathcal{A}\setminus l} \ar[r]&  \cO_l(-c-k+1) \ar[r]&0.}
$$  
  This induces a commutative diagram   
  $$
\xymatrix{
  0\ar[r]&\T_{\mathcal{A}}\ar[r]\ar[d]& \cO_{\P^2}(-a)\oplus \cO_{\P^2}(-b) \oplus \cO_{\P^2}(-c)  \ar[r]^-{\psi} \ar[d]_{M}&   \mathcal{I}_T(-1)\ar[r]\ar[d]_{i}& 0\\
  0\ar[r]&\T_{\mathcal{A}\setminus l} \ar[r]\ar[d]& \cO_{\P^2}(-a)\oplus \cO_{\P^2}(-b) \oplus \cO_{\P^2}(-c+1) \ar[r]^-{\psi_0}\ar[d]&  \mathcal{I}_{T\setminus Z}(-1)\ar[r]\ar[d]& 0\\
  0\ar[r]&\cO_l(-c-k+1) \ar[r]& \cO_l(-c+1) \ar[r]& \cO_Z \ar[r]& 0.
  }
  $$
    The morphism $i$ is the inclusion of ideal sheaves, $M$ is represented by the matrix 
    $$
\left(
\begin{array}{ccc}
1&0 &0 \\
0& 1&0\\
0&0&y-\gamma x
\end{array}
\right),
$$
and they verify $i\circ \psi=\psi_0 \circ M$, where $\psi_0$ has the described form by induction. This proves that $\psi$ has the required form.
\end{proof}
\begin{remark}
We would like to underline that the previous result gives us a graded isomorphism between the syzygy modules of the Jacobian ideal $\mathcal{J}_f$ of the arrangement and of the ideal generated by the three products of inner lines of the triangular arrangement (one for each vertex). This relation will be of extreme importance when we will consider the \textit{complementary arrangement}, a concept we will introduce later on to study triangular ones.
\end{remark}

\begin{remark}
\label{chernClass}
 By the hypothesis on $\mathcal{A}$, the set $T$ is smooth. Its length is related to the second Chern class of $\T_{\mathcal{A}}$; more precisely, we have that $c_1(\T_{\mathcal{A}})=1-a-b-c$ and
 \begin{equation}
 c_2(\T_{\mathcal{A}})=\binom{a+b+c-1}{2}-\binom{a}{2}-\binom{b}{2}-\binom{c}{2}-|T| = (ab+bc+ac-a-b-c+1)-|T|.
 \end{equation}
\end{remark}
\begin{theo}\label{CI}
 The bundle $\T_{\mathcal{A}}$ is free with exponents $(a+b-1,c)$ if and only if\, $T$ is a complete intersection $(a-1,b-1)$.
\end{theo}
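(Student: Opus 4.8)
The plan is to use the exact sequence from the Proposition: $0 \to \T_\cA \to \cO(-a)\oplus\cO(-b)\oplus\cO(-c) \to \mathcal{J}_T(-1) \to 0$, and analyze when $\T_\cA$ splits as $\cO(-(a+b-1))\oplus\cO(-c)$, relating this to the structure of $T$. The strategy has two directions.

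For the "if" direction: assume $T$ is a complete intersection of type $(a-1,b-1)$. First I would produce a Koszul-type resolution of $\mathcal{J}_T$: since $T$ is a complete intersection of two curves of degrees $a-1$ and $b-1$, there is an exact sequence $0 \to \cO(-(a-1)-(b-1)) \to \cO(-(a-1))\oplus\cO(-(b-1)) \to \mathcal{J}_T \to 0$. Twisting by $\cO(-1)$ gives a resolution of $\mathcal{J}_T(-1)$. Then I would compare this with the Proposition's sequence. The key point is that $\T_\cA$ has rank $2$ with $c_1(\T_\cA) = 1 - a - b - c$, and one checks (via Remark on Chern classes, using $|T| = (a-1)(b-1)$) that $c_2(\T_\cA) = c(a+b-1)$, which matches $c_2$ of the candidate splitting. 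To actually get the splitting rather than just matching Chern classes, I would show $\T_\cA(c)$ has a section vanishing nowhere, or more directly: the surjection $\cO(-c) \hookrightarrow \cO(-a)\oplus\cO(-b)\oplus\cO(-c) \to \mathcal{J}_T(-1)$ — wait, that's not surjective. Instead, I would build the map $\cO(-(a+b-1)) \to \T_\cA$ directly. From the Koszul resolution, the composite $\cO(-(a+b-1)) \hookrightarrow \cO(-(a-1))\oplus\cO(-(b-1)) \xrightarrow{(-g,f)} $ ... the cleanest approach: the two equations $f_{a-1}, f_{b-1}$ cutting out $T$ give, after multiplying by the side-line equations, relations that lift to sections of $\T_\cA$; I would exhibit an explicit section of $\T_\cA(a+b-1)$ and show it generates a subbundle $\cO$ with quotient $\cO(-c)$ by a snake-lemma / Chern class argument, or use that a rank-2 bundle with a nowhere-vanishing section of a twist splits.

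For the "only if" direction: assume $\T_\cA = \cO(-(a+b-1))\oplus\cO(-c)$. Then from the Proposition's sequence, $\mathcal{J}_T(-1)$ is the cokernel of a map $\cO(-(a+b-1))\oplus\cO(-c) \to \cO(-a)\oplus\cO(-b)\oplus\cO(-c)$. I would argue that the $\cO(-c)\to\cO(-c)$ component must be (up to the rest) an isomorphism or at least that the $\cO(-c)$ summand in the source can be split off against the $\cO(-c)$ in the target — more carefully, I'd take the long exact sequence and deduce a presentation $\cO(-(a+b-1)) \to \cO(-a)\oplus\cO(-b) \to \mathcal{J}_T(-1) \to 0$ after killing the common $\cO(-c)$ (justifying this requires that the induced map $\cO(-c)\to\cO(-c)$ is nonzero, hence an isomorphism of that summand, which follows from comparing the maps' behavior at a general point, or from a minimal free resolution argument). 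Then $\mathcal{J}_T$ has a presentation $\cO(-(a+b-2)) \to \cO(-(a-1))\oplus\cO(-(b-1)) \to \mathcal{J}_T \to 0$; since $T$ has codimension $2$ in $\P^2$, the Hilbert–Burch theorem (or just the Koszul structure forced by the degrees: $(a-1)+(b-1) = a+b-2$) implies the two generators form a regular sequence, so $T$ is the complete intersection of type $(a-1,b-1)$.

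The main obstacle I expect is the bookkeeping in the "only if" direction: showing that the $\cO(-c)$ summand genuinely cancels, i.e., that the presentation of $\mathcal{J}_T$ really has exactly two generators of the predicted degrees and not extra ones of degree $c$. This requires ruling out degeneracies in the map $\cO(-c) \to \cO(-c)$ coming from $\T_\cA$'s embedding — equivalently showing $\mathrm{Hom}(\cO(-c), \T_\cA)$ behaves as expected, or invoking minimality of free resolutions together with $c \le a+b-1$ (which should hold, perhaps after noting $c \le a+b-1$ is forced, e.g. by $|T| \ge 0$ together with the Chern class formula, or by a stability/semistability estimate on $\T_\cA$). Once the presentation is pinned down, Hilbert–Burch (a $1\times 1$ "minor" situation, degenerate but still giving that the ideal is generated by a regular sequence when the degrees add up correctly) finishes it; in the "if" direction the analogous obstacle is producing the splitting section explicitly, which I'd handle by a direct computation with the defining polynomials of the side lines and the two curves through $T$.
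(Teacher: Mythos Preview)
Your overall strategy is sound, but you are missing the one geometric observation that makes both directions almost immediate and dissolves the obstacles you flag.

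The point you overlook is that the surjection $\phi:\cO_{\P^2}(-a)\oplus\cO_{\P^2}(-b)\oplus\cO_{\P^2}(-c)\to\mathcal{J}_T(-1)$ in the Proposition is not an abstract presentation: its three components are precisely the products of the $a-1$, $b-1$, and $c-1$ \emph{inner} lines through $A$, $B$, $C$ respectively. In particular the first two components are always curves of degrees $a-1$ and $b-1$ containing $T$, with no common component.

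With this in hand the ``if'' direction is one line: if $T$ is a complete intersection of type $(a-1,b-1)$, then those very two curves generate $\mathcal{J}_T$, so the restriction $\phi|_{\cO_{\P^2}(-a)\oplus\cO_{\P^2}(-b)}$ is already surjective with Koszul kernel $\cO_{\P^2}(1-a-b)$. The third summand $\cO_{\P^2}(-c)$ then contributes a second direct summand to the kernel, and $\T_\cA=\cO_{\P^2}(1-a-b)\oplus\cO_{\P^2}(-c)$. No need to manufacture a nowhere-vanishing section by hand.

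For the ``only if'' direction, the paper bypasses your $\cO_{\P^2}(-c)$-cancellation problem and Hilbert--Burch entirely. From the assumed splitting one reads off $|T|=(a-1)(b-1)$ via the Chern-class formula in the Remark. But $T$ is \emph{always} contained in the intersection of the $a-1$ inner lines through $A$ with the $b-1$ inner lines through $B$, and that intersection consists of exactly $(a-1)(b-1)$ points. Equality of cardinalities forces $T$ to be the full complete intersection. (The paper also notes $\HH^0(\mathcal{J}_T(a-1))\neq 0$ and $\HH^0(\mathcal{J}_T(a-2))=0$, but the heart of the argument is the length count against the always-available containing complete intersection.)

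So the ``main obstacle'' you anticipate --- justifying that the $\cO_{\P^2}(-c)\to\cO_{\P^2}(-c)$ entry is invertible --- is real for a purely homological argument, but it never arises once you use that $\phi$ is built from the line products. Your Hilbert--Burch route could presumably be pushed through, but it is strictly harder than the two-line counting argument the paper uses.
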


\begin{remark}\label{bigger-split}
 This is the most unbalanced  splitting that is allowed for $\cA \in \Tr(a,b,c)$, \textit{i.e.}~the highest difference possible between the two exponents. Indeed, $|T|$ cannot be bigger than $(a-1)(b-1)$.
\end{remark}

\begin{proof}
 Assume that $T$ is a complete intersection $(a-1,b-1)$. Since $T$ is the locus of inner triple points, the curve defined by $(a-1)$ lines passing through $A$ contains $T$, and the curve defined by $(b-1)$ lines passing through $B$ also contains  $T$. These two curves generate the ideal defining $T$, which implies that the kernel of the last map of the exact sequence
 $$
\xymatrix@C=16pt{
  0\ar[r]&\T_{\mathcal{A}}\ar[r]& \cO_{\P^2}(-a)\oplus \cO_{\P^2}(-b) \oplus \cO_{\P^2}(-c) \ar[r]&  \mathcal{I}_T(-1) \ar[r]&0}
$$
 is $\cO_{\P^2}(-a-b+1)\oplus \cO_{\P^2}(-c)$. This proves that  $\T_{\mathcal{A}}$ is free with exponents $(a+b-1,c)$.
 
 Reciprocally, if $\T_{\mathcal{A}}=\cO_{\P^2}(-a-b+1)\oplus \cO_{\P^2}(-c)$, then we have $\HH^0(\mathcal{I}_T(a-1))\neq 0$ and $\HH^0(\mathcal{I}_T(a-2))= 0$.  Moreover, the length of $T$, given by the numerical invariant of the above exact sequence, is $(a-1)(b-1)$, and this proves that $T$ is a complete intersection $(a-1,b-1)$.
\end{proof}

\begin{remark}\label{possible-splitting}
  If $c\ge a+b-1$, the splitting type of $\TA$ along the lines joining $A$ to $C$ or $B$ to $C$ is fixed and is $\cO_l(1-a-b)\oplus \cO_l(-c)$; this is a consequence of \cite[Theorem 3.1]{WY}. Therefore, under the condition $c\ge a+b-1$, the arrangement is free if and only if
  $$\T_{\cA}=\cO_{\P^2}(1-a-b)\oplus \cO_{\P^2}(-c).$$
  That is why, if we want to describe all the possible splitting types of free triangular arrangements with $a+b+c$ lines ($a+1$ by $A$, $b+1$ by $B$ and $c+1$ by $C$), we can assume that $c\le a+b-2$. By \cite[Theorem 1.2]{D2}, if $\T_\cA$ is free with exponents $(\alpha, \beta)$, then we have that $c\leq\alpha\leq\beta\leq a+b-1$. This means that the biggest possible gap between the exponents is $| a+b-1-c|$, and, moreover, it is realized by the complete intersection $(a-1)(b-1)$. In particular, it could be described with a \textit{Roots-of-Unity-Arrangement} (see the definition below): let $\zeta$ be a primitive $\supth{(c-1)}$ root of unity; the arrangement
 $$xyz\prod_{i=0}^{a-2}(x-\zeta^i y)\prod_{j=0}^{b-2}(y-\zeta^j z)\prod_{k=0}^{c-2}(x-\zeta^k z)=0$$
 belongs to $\Tr(a,b,c)$, and it is free with exponents $(a+b-1,c)$.
\end{remark}

\begin{defi}
A triangular arrangement $\mathcal{A}$ of $a+b+c$ lines defined by an equation
$$xyz\prod_{i=1}^{a-1}(x-\alpha_i y)\prod_{j=1}^{b-1}(y-\beta_j z)\prod_{k=1}^{c-1}(x-\gamma_k z)=0,$$
is called a \textit{Roots-of-Unity-Arrangement} (RUA for short) if the coefficients $\alpha_i, \beta_j$ and $\gamma_k$ can all be expressed as powers of a fixed $\supth{n}$ root of unity $\zeta$.
\end{defi}

The following results concerning the arrangements $\cA^0_3(n)$, $\cA^1_3(n)$, $\cA^2_3(n)$ and $\cA^3_3(n)$, defined respectively by the equations $f_n=0, xf_n=0, xyf_n=0$ and $xyzf_n=0$, where $f_n= (x^n-y^n)(y^n-z^n)(x^n-z^n)$, are well known (see in particular \cite[Propositions 6.77 and 6.85]{OT} or \cite[p.~119]{Hirzebruch}).  The arrangements $\cA^3_3(n)$ and $\cA^0_3(n)$ are reflection arrangements associated respectively to the full monomial group $G(n,1,3)$ (see \cite[Example 6.29]{OT}) and to the monomial group $G(n,3,3)$; we call them the full monomial arrangement and the monomial arrangement.

\begin{cor}
\label{reflection-3}
  The arrangement $\mathcal{A}_3^3(n)$ is free with exponents $(n+1, 2n+1)$.
\end{cor}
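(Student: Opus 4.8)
The plan is to realise $\mathcal{A}_3^3(n)$ as a member of $\mathrm{Tr}(n+1,n+1,n+1)$ whose set of inner triple points is a complete intersection, and then to invoke Theorem \ref{CI}.

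First I would fix the triangle. Let $\zeta$ be a primitive $n$-th root of unity, so that
\[ x^n-y^n=\prod_{i=0}^{n-1}(x-\zeta^i y),\qquad y^n-z^n=\prod_{j=0}^{n-1}(y-\zeta^j z),\qquad x^n-z^n=\prod_{k=0}^{n-1}(x-\zeta^k z). \]
The first product is a union of $n$ lines through $C=[0:0:1]$, the second of $n$ lines through $A=[1:0:0]$, the third of $n$ lines through $B=[0:1:0]$, while the remaining three factors $x,y,z$ of $xyzf_n$ cut out exactly the three side lines of the triangle $ABC$. Hence each vertex lies on $n+2$ lines of $\mathcal{A}_3^3(n)$, so $\mathcal{A}_3^3(n)\in\mathrm{Tr}(n+1,n+1,n+1)$ with $a=b=c=n+1$ and $3n+3$ lines in total; in particular the exponents of a free $\T_{\mathcal{A}_3^3(n)}$ must sum to $3n+2$, which agrees with $(n+1)+(2n+1)=3n+2$.

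Next I would determine the set $T$ of inner triple points. An inner triple point is the common point of a line $x=\zeta^i y$ through $C$, a line $y=\zeta^j z$ through $A$ and a line $x=\zeta^k z$ through $B$; these concur precisely when $\zeta^{i+j}=\zeta^k$, i.e. when $k\equiv i+j \Mod{n}$, and then the point is $[\zeta^k:\zeta^j:1]$. A short check shows that no side line carries an extra triple point: on each side line the points other than its two vertices are simple intersections with a single line through the opposite vertex. Therefore
\[ T=\{\,[\zeta^k:\zeta^j:1]\ :\ 0\le j,k\le n-1\,\}, \]
a reduced set of $n^2$ ordinary triple points.

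Finally I would recognise $T$ as a complete intersection. Every point of $T$ satisfies $x^n=z^n$ and $y^n=z^n$, so $T\subseteq V(x^n-z^n)\cap V(y^n-z^n)$; the two curves $x^n-z^n=0$ and $y^n-z^n=0$ have degree $n$ and no common component, hence by B\'ezout their intersection is zero-dimensional of length $n^2$ and thus coincides with $T$. Since $a-1=b-1=n$, this exhibits $T$ as the complete intersection $(a-1,b-1)$, and Theorem \ref{CI} yields that $\T_{\mathcal{A}_3^3(n)}$ is free with exponents $(a+b-1,c)=(2n+1,n+1)$, which is the asserted pair $(n+1,2n+1)$. (Equivalently, $\mathcal{A}_3^3(n)$ is exactly the Roots-of-Unity-Arrangement displayed at the end of Remark \ref{possible-splitting} in the case $a=b=c=n+1$ with $\zeta$ of order $c-1=n$, for which freeness and exponents are already recorded there.) The only slightly delicate point in this argument is checking that the side lines contribute no triple point to $T$; the rest is a direct identification together with a B\'ezout count.
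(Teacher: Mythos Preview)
Your proof is correct and follows the same approach as the paper: identify $\mathcal{A}_3^3(n)$ as an element of $\mathrm{Tr}(n+1,n+1,n+1)$, show that its set $T$ of inner triple points is the complete intersection of two degree-$n$ forms (you use $(x^n-z^n,\,y^n-z^n)$, the paper uses $(x^n-y^n,\,y^n-z^n)$, which define the same ideal), and apply Theorem~\ref{CI}. The paper's proof is a single sentence to this effect; you have simply spelled out the details, including the check that no side line contributes an inner triple point.
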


\begin{proof}
The set of inner triple points $T$ is a complete intersection of length $n^2$ defined by the ideal 
$(x^n-y^n,$ $y^n-z^n)$.
\end{proof}

Removing the sides of the triangle and applying \cite[Proposition 5.2]{FV}, we also obtain the following. 

\begin{cor}
\label{reflection-012}
 The arrangements $\mathcal{A}_2^3(n), \mathcal{A}_1^3(n)$ and $\mathcal{A}_0^3(n)$ are obtained, respectively, from $\mathcal{A}_3^3(n)$, $\mathcal{A}_2^3(n)$ and $\mathcal{A}_1^3(n)$ by deleting one line between two vertices of the triangle. They are free with exponents respectively equal to $(n+1, 2n)$, $(n+1, 2n-1)$ and $(n+1, 2n-2)$.
\end{cor}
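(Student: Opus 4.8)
The plan is as follows. The deletion claims are immediate from the defining equations: $\mathcal{A}_2^3(n)=xyf_n$ is $\mathcal{A}_3^3(n)=xyzf_n$ with the side line $\{z=0\}$ removed, $\mathcal{A}_1^3(n)=xf_n$ is $\mathcal{A}_2^3(n)$ with $\{y=0\}$ removed, and $\mathcal{A}_0^3(n)=f_n$ is $\mathcal{A}_1^3(n)$ with $\{x=0\}$ removed; each of these is a line through two of the three vertices $[1:0:0]$, $[0:1:0]$, $[0:0:1]$. To get the freeness together with the exponents, I would apply Terao's Addition--Deletion theorem \cite[Theorem 4.51]{OT} three times in a row, starting from the full monomial arrangement $\mathcal{A}_3^3(n)$, which is free with exponents $(n+1,2n+1)$ by Corollary~\ref{reflection-3}. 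I use the convention that the exponents of a line arrangement in $\P^2$ form a triple $\{1,a,b\}$ (so that $\T_{\mathcal{A}}=\cO_{\P^2}(-a)\oplus\cO_{\P^2}(-b)$ means exponents $\{1,a,b\}$), together with the elementary fact that the restriction $\mathcal{A}^H$ of a line arrangement to one of its lines $H$ is a rank-two central arrangement, hence free, with exponents $\{1,m-1\}$ whenever it consists of $m$ points on $H\cong\P^1$.

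For the first step, take $\mathcal{A}=\mathcal{A}_3^3(n)$ and $H=\{z=0\}$, so that $\mathcal{A}'=\mathcal{A}_2^3(n)$ and $\mathcal{A}''=\mathcal{A}^H$. Intersecting the remaining lines with $\{z=0\}$: all lines through $[1:0:0]$ (among them the side line $\{y=0\}$) meet $H$ at $[1:0:0]$, all lines through $[0:1:0]$ (among them $\{x=0\}$) meet $H$ at $[0:1:0]$, and the $n$ lines $x^n=y^n$ through the opposite vertex $[0:0:1]$ meet $H$ in the $n$ pairwise distinct points $[\zeta^i:1:0]$, none of them a vertex. Hence $\mathcal{A}''$ has exactly $n+2$ points, so it is free with exponents $\{1,n+1\}$. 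Since $\{1,n+1\}$ is obtained from the exponents $\{1,n+1,2n+1\}$ of $\mathcal{A}$ by deleting the entry $2n+1$, the deletion direction of \cite[Theorem 4.51]{OT} applies and gives that $\mathcal{A}_2^3(n)$ is free with exponents $\{1,n+1,(2n+1)-1\}=\{1,n+1,2n\}$, i.e. $(n+1,2n)$.

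The next two steps have exactly the same shape. Deleting $H=\{y=0\}$ from $\mathcal{A}_2^3(n)$, the trace on $\{y=0\}$ consists of the two vertices $[1:0:0]$ and $[0:0:1]$ lying on that line (the surviving side line $\{x=0\}$ passes through $[0:0:1]$, hence contributes nothing new) together with the $n$ distinct points $[\eta^k:0:1]$ cut out by the $n$ lines $x^n=z^n$ through the opposite vertex $[0:1:0]$; so $\mathcal{A}''$ again has $n+2$ points and exponents $\{1,n+1\}$, and Addition--Deletion now yields that $\mathcal{A}_1^3(n)$ is free with exponents $\{1,n+1,2n-1\}$. Deleting finally $H=\{x=0\}$ from $\mathcal{A}_1^3(n)$, the trace on $\{x=0\}$ is the pair of vertices $[0:1:0]$, $[0:0:1]$ on that line together with the $n$ distinct points $[0:\mu^j:1]$ coming from the lines $y^n=z^n$ through $[1:0:0]$, so $\mathcal{A}''$ has $n+2$ points, and a third application of \cite[Theorem 4.51]{OT} shows that $\mathcal{A}_0^3(n)$ is free with exponents $\{1,n+1,2n-2\}$, i.e. $(n+1,2n-2)$.

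The argument is essentially bookkeeping once Corollary~\ref{reflection-3} is in hand; the one point that really needs to be checked at each of the three steps is the count $|\mathcal{A}^H|=n+2$ --- equivalently, that the $n$ lines through the vertex of the triangle opposite to $H$ meet $H$ in $n$ \emph{distinct} points, none of which is a vertex --- so that $\mathcal{A}^H$ has exponents $\{1,n+1\}$ and that these form the correct sub-multiset of the exponents of the ambient free arrangement, the Euler exponent $1$ and the exponent $n+1$ staying fixed while the largest one drops by one at each step.
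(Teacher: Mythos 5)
Your proposal is correct and follows the same route as the paper: starting from the freeness of $\mathcal{A}_3^3(n)$ with exponents $(n+1,2n+1)$ and applying the Addition--Deletion theorem of \cite[Theorem 4.51]{OT} once for each deleted side line. The paper states this in one line, while you supply the bookkeeping (the count $|\mathcal{A}^H|=n+2$ and the matching of exponents at each step), which is exactly the verification the paper leaves implicit.
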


\section{Roots-of-Unity-Arrangement}\label{sec-RUA}
Let $L(\cA)$ be the set of all the intersections of lines in $\cA$. There is a partial order by reverse inclusion on this set corresponding to the inclusion of points in lines. This setting leads to a very important combinatorial invariant of the arrangement $\cA$.

\begin{defi}
 Let $\cA$ be a line arrangement in $\P^2$. The set $L(\cA)$ is called the {\it combinatorics} of $\cA$.  Two line arrangements $\cA_0$ and $\cA_1$ have the same combinatorics if and only if there is a bijection between $L(\cA_0)$ and $L(\cA_1)$ preserving the partial order.
\end{defi}

 In \cite[Conjecture 4.138]{OT} Terao conjectures that if two arrangements have the same combinatorics and one of them is free, then the other one is also free.  This problem posed in any dimension and on any field is still open even on the projective plane and seems far from being proved, probably because relatively few free arrangements are known.

 In order to approach Terao's conjecture, this section is devoted to showing the central role of RUAs.
 
\begin{theo}\label{thm-roots}
Given a triangular arrangement, there exists a RUA with the same combinatorics.
\end{theo}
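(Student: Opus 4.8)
The plan is to translate the combinatorial statement into one about multiplicative relations among the defining coefficients, and then to invoke the density of torsion points in an algebraic subgroup of a torus. First I would fix a normal form: after a projective change of coordinates the three base points become $A=[1:0:0]$, $B=[0:1:0]$, $C=[0:0:1]$, and — assuming first that the arrangement is complete, the uncomplete case being settled at the end — the arrangement is
\[
xyz\,\prod_{i=1}^{a-1}(x-\alpha_i y)\,\prod_{j=1}^{b-1}(y-\beta_j z)\,\prod_{k=1}^{c-1}(x-\gamma_k z)=0
\]
with $\alpha_i,\beta_j,\gamma_k\in\C^{*}$; thus $\cA$ is encoded by the point $P=(\alpha_{\bullet},\beta_{\bullet},\gamma_{\bullet})$ of the torus $G:=(\C^{*})^{d}$, $d:=a+b+c-3$, whose coordinate functions I call $X_i,Y_j,Z_k$.

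A direct case analysis shows that, away from the three vertices, a point of $\P^{2}$ lies on at most one line of each of the three pencils — so it has multiplicity at most $3$ — and that it is an inner triple point exactly when it is the common point of lines $x-\alpha_i y=0$, $y-\beta_j z=0$ and $x-\gamma_k z=0$, which occurs if and only if $\alpha_i\beta_j=\gamma_k$; moreover distinct such points never coincide, and the side lines $\{x=0\},\{y=0\},\{z=0\}$ contribute only double points and the vertex multiplicities. Hence, for arrangements of $\mathrm{Tr}(a,b,c)$ written in this normal form, the intersection lattice is completely determined by the \emph{concurrence set}
\[
\Sigma\;=\;\{(i,j,k):\alpha_i\beta_j=\gamma_k\}\;\subseteq\;\{1,\dots,a-1\}\times\{1,\dots,b-1\}\times\{1,\dots,c-1\}.
\]

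Now let $\Sigma$ be the concurrence set of the given $\cA$ and set
\[
V\;=\;\{\,x\in G\ :\ X_iY_jZ_k^{-1}=1\text{ for all }(i,j,k)\in\Sigma\,\},
\]
a closed algebraic subgroup of $G$, and let $B\subset G$ be the closed set which is the union of the subtori $\{X_iY_jZ_k^{-1}=1\}$ for $(i,j,k)\notin\Sigma$ together with the subtori $\{X_i=X_{i'}\}$, $\{Y_j=Y_{j'}\}$, $\{Z_k=Z_{k'}\}$ for $i\neq i'$, $j\neq j'$, $k\neq k'$. By the previous paragraph every point of $V\setminus B$ has concurrence set exactly $\Sigma$ and pairwise distinct lines, so it is the normal form of a triangular arrangement of $\mathrm{Tr}(a,b,c)$ with exactly the combinatorics of $\cA$; and $P\in V\setminus B$, so $V\setminus B$ is a nonempty Zariski-open subset of $V$. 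It therefore suffices to find a torsion point of $G$ inside $V\setminus B$: its coordinates are roots of unity, hence powers of one primitive $n$-th root $\zeta$, and the associated arrangement is a RUA realizing $\Sigma$, that is, the combinatorics of $\cA$.

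Finally, it is classical that torsion points are Zariski dense in every closed algebraic subgroup of a torus; concretely, putting the integer exponent matrix of the relations defining $V$ into Smith normal form exhibits $V$, up to a monomial automorphism of $G$ (which preserves torsion points), as a product of finitely many finite cyclic groups and a subtorus, where density of torsion points is immediate. Thus the nonempty Zariski-open subset $V\setminus B$ of $V$ contains a torsion point, and the proof is complete. If $\cA$ is uncomplete one first adjoins the missing side line(s), applies the above to the completion, and then deletes the corresponding line(s) from the resulting RUA; since deleting a line is a purely combinatorial operation, the result still has the combinatorics of $\cA$. The one delicate step is the combinatorial dictionary of the second paragraph: one has to be sure that the combinatorics of a triangular arrangement encodes nothing beyond $(a,b,c)$ and $\Sigma$ — in particular that there are no quadruple points off the vertices and no accidental fusion of double points — so that reproducing $\Sigma$ genuinely reproduces the whole intersection lattice; the rest of the argument is soft.
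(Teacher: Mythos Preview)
Your proof is correct and takes a genuinely different route from the paper's. Both arguments begin by observing that, in the normal form, the intersection lattice of a complete triangular arrangement is determined by the set $\Sigma$ of multiplicative relations $\alpha_i\beta_j=\gamma_k$ (equivalently $\alpha_i\beta_j\gamma_k=1$ in the paper's convention) together with the distinctness of the coefficients; from there the two proofs diverge. The paper reduces modulo a large prime $p$, replaces the multiplicative relations by \emph{linear} congruences modulo $p-1$ via a primitive root, exhibits an explicit nonzero integer solution of the associated homogeneous linear system, and then argues that adding any forbidden relation strictly shrinks the solution set for all but finitely many $p$ --- otherwise the forbidden relation would already be a consequence of the given ones over $\C$, contradicting the original arrangement. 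You instead read the equalities as cutting out a closed algebraic subgroup $V\subset(\C^{*})^{d}$, note that the original arrangement witnesses that $V\setminus B$ is nonempty Zariski-open, and invoke the density of torsion points in $V$ (made explicit by Smith normal form). Your argument is more structural and sidesteps the somewhat delicate step in the paper where one passes from ``the congruence holds for infinitely many primes'' back to an exact multiplicative identity; the paper's argument, in exchange, is more hands-on and in principle gives some control on the order $n$ of the root of unity needed. One small terminological point: by the paper's Definition a RUA always contains the three side lines, so in the uncomplete case what your deletion produces is a roots-of-unity arrangement with some sides removed rather than a RUA in the strict sense; this is harmless but worth flagging.
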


\begin{proof}
Let us consider the triangular arrangement defined by the  equations
$$
\left\{
\begin{array}{l}
x=0,\\
x=\alpha_i y,\\
y=0,\\
y = \beta_j z,\\
z=0,\\
z = \gamma_k x, 
\end{array}
\right.
$$
where $x=y=z=0$ are the lines which make up the triangle, $\alpha_i\neq0$, $i=1,\ldots,a-1$, and $\alpha_{i_1}\neq \alpha_{i_2}$ for $i_1\neq i_2$, 
and the same properties hold for the $\beta_j$ and the $\gamma_k$, $j=1,\ldots, b-1$ and $k=1,\ldots,c-1$.

Observe that the existence of an inner triple point, defined by three lines $x=\alpha_{\bar{\imath}} y, y = \beta_{\bar{\jmath}} z$ and $z = \gamma_{\bar{k}} x$, is given by a relation of the  type
$$
\alpha_{\bar{\imath}}\beta_{\bar{\jmath}}\gamma_{\bar{k}}=1.
$$
Therefore, we can translate the combinatorics of the arrangement into a family of equalities
\begin{equation}\label{fam-1}
\alpha_{i_1}\beta_{j_1}\gamma_{k_1}=1
\end{equation}
for each $i_1,j_1,k_1$ whose associated lines define an inner triple point of the arrangement, a family of inequalities
\begin{equation}\label{fam-2}
\alpha_{i_2}\beta_{j_2}\gamma_{k_2}\neq 1
\end{equation}
for each $i_2,j_2,k_2$ whose associated lines do not define an inner triple point of the arrangement and, finally, the inequalities 
\begin{equation}\label{fam-3}
\alpha_{i_1}\neq \alpha_{i_2},\; \alpha_{i_1}\neq 0,\; \beta_{j_1}\neq \beta_{j_2}, \;\beta_{j_1}\neq 0,\; \gamma_{k_1}\neq\gamma_{k_2},\; \gamma_{k_1}\neq 0, 
\end{equation}
for each $i_1,i_2=1,\ldots,a-1$ with $i_1\neq i_2$, each $j_1,j_2=1,\ldots,b-1$ with $j_1\neq j_2$ and each $k_1,k_2=1,\ldots,c-1$ with $k_1\neq k_2$.

Our goal is to find solutions, or at least prove their existence, which satisfy all the previous relations and that can be expressed as various powers of an $\supth{n}$ root of the unity, for a given $n$.

Recall that (see for example \cite{Serre} for a reference) having a solution of the system defined by (\ref{fam-1}) in $\mathbb{C}$ implies having a solution in $\mathbb{Z}/p\mathbb{Z}$ for infinitely many primes $p$. Furthermore, we can consider the equations as in the abelian group of units $(\mathbb{Z}/p\mathbb{Z})^*$, and hence we obtain solutions as $\supth{p}$ roots of unity.

Nevertheless, we would like to show explicitly that these newfound solutions do not satisfy any of the equations described by (\ref{fam-2}) and (\ref{fam-3}); \textit{i.e.}~they continue to satisfy the inequalities.

Let us consider a prime number $p$ and one of its primitive roots $\omega$. Hence, working modulo $p$, we can translate all the relations of type (\ref{fam-1}) to 
$$
\omega^{v_{i_1}}\omega^{w_{j_1}}\omega^{t_{k_1}}\equiv 1\Mod{p}
$$ 
or, equivalently, to a family of linear equations
\begin{equation}\label{fam-4}
v_{i_1}+w_{j_1}+t_{k_1} \equiv 0\Mod{p-1}.
\end{equation}
Now,  add to our previous linear system, modulo $p$, an equality given by one of the equations appearing in (\ref{fam-2}), that we can suppose to be of the form
$$
v+w+t \equiv 0\Mod{p-1}.
$$
Then, either we have fewer solutions than before, or the added condition is a consequence of the others. In particular, suppose that for a fixed $p$, we have exactly the same solutions; this means that the added condition is a linear combination of the previous ones, \textit{i.e.}\ 
$$
v+w+t \equiv \sum_s \lambda_{s,p}\left(v_{i_s}+w_{j_s}+t_{k_s}\right)\Mod{p-1},
$$
which is equivalent to having 
$$
\omega^v \omega^w \omega^t \equiv \prod_s \left(\omega^{v_{i_s}} \omega^{w_{j_s}} \omega^{t_{k_s}}\right)^{\lambda_{s,p}}\Mod{p}
$$
and therefore
$$
\alpha \beta \gamma \equiv \prod_s\left(\alpha_{i_s}\beta_{j_s}\gamma_{k_s}\right)^{\lambda_{s,p}}\Mod{p}.
$$
We conclude by noticing that if we have the previous relation for an infinite set of prime numbers, then we must also have, in the complex field,
$$
\alpha \beta \gamma = \prod_s\left(\alpha_{i_s}\beta_{j_s}\gamma_{k_s}\right)^{\lambda_{s}},
$$
which implies that the added condition is a consequence to the other equalities, which gives a contradiction because of our hypothesis on the triple points.

Following the same reasoning as before for the relations expressed in the family (\ref{fam-3}), it is possible to find solutions that do also satisfy those inequalities.

Therefore, as a consequence of what we have said, it is possible to find infinitely many primes $p$ such that, taking one of its  roots of unity   $\zeta$, we can associate to each coefficient $\alpha_i$, $\beta_j$ and $\gamma_k$, respectively, powers $\zeta^{\bar{\alpha}_i}$, $\zeta^{\bar{\beta}_j}$ and $\zeta^{\bar{\gamma}_k}$ of the root of unity. The arrangement defined by the lines $x = \zeta^{\bar{\alpha}_i}y$, $y=\zeta^{\bar{\beta}_j}z$ and $ z = \zeta^{\bar{\gamma}_k}x$ satisfies all the conditions expressed in (\ref{fam-1}), (\ref{fam-2}) and (\ref{fam-3}) and  therefore has the exact same combinatorics as the original one.
\end{proof}

\section{Characterizations of free Roots-of-Unity-Arrangements}
\label{sec-family}
In this section we describe some free arrangements in $\Tr(a,b,c)$ obtained from a full monomial arrangement; we also give some conditions implying freeness for these arrangements.

First of all, let us introduce some definitions and make some remarks that will be used throughout this section.

Directly from its definition, any Roots-of-Unity-Arrangement $\cA$ can be obtained from a full monomial arrangement by excluding (or, as we will say from now on \textit{deleting}) the lines that are associated to the powers of the $\supth{n}$ root of unity  which do not appear in $\cA$. The set of deleted lines form a further arrangement, that we will call the \textit{complementary arrangement} of $\cA$ and denote by $\cA^c$. We will also call a \textit{step} each deletion of a line in the process that constructs the arrangement $\cA$ from the full monomial one. Observe that the arrangement $\cA^c$ is triangular as well, with no side line of the triangle.  Therefore, it makes sense to define analogously the \textit{triple point}s of the complementary arrangements as the points which belong to the intersections of three lines, one for each family.  To better relate the result and the example with their graphical representation, as we will see later, we will also call \textit{horizontal, vertical}
and \textit{diagonal}  the lines passing through, respectively, $A$ (first family), $B$ (second family) and $C$ (third family).

Let us denote by $a^{'}=N-a+1$, $b^{'}=N-b+1$ and $c^{'}=N-c+1$, respectively, the number of inner lines deleted from a full monomial arrangement $\mathcal{A}_3^3(N)$ and passing through the vertices $A$, $B$ and $C$.  Let us denote by $f_A=0, f_B=0$ and $f_C=0$, respectively, the defining equation for these three families.  Therefore, the complementary arrangement $\cA^c$ will be defined by the equation $f_Af_Bf_C=0$. Finally, denote by $T_{\rem}$ the set of inner triple points of $\cA^c$.
 
We have the  commutative diagram
\begin{equation}\label{diag-syz}
\xymatrix{
 &  \cO_{\P^2}(-1-N) \ar[r]^{\simeq} \ar[d] & \cO_{\P^2}(-1-N)  \ar[d]^{(f_A,f_B,-f_C)^T}\\
 0 \ar[r] & \T_{\cA} \ar[d]  \ar[r]& \cO_{\P^2}(-a)\oplus \cO_{\P^2}(-b)\oplus \cO_{\P^2}(-c) \ar[r]^(.72){\mathrm{\psi}} \ar[d]   & \mathcal{I}_T(-1) \ar[r] \ar[d] & 0 \\
 0\ar[r] & \mathcal{I}_{\Gamma}(N+2-a-b-c)   \ar[r] & \mathcal{F} \ar[r] & \mathcal{I}_{T}(-1) \ar[r] & 0\rlap{,} 
}
\end{equation}
where $\TA$ is the logarithmic bundle associated to the triangular arrangement $\cA\in \Tr(a,b,c)$, and therefore  
$$
\psi=\left[\frac{x^{N}-y^{N}}{f_A},\frac{y^{N}-z^{N}}{f_B} ,\frac{x^{N}-z^{N}}{f_C}\right],
$$
given by the inner triple lines passing, respectively, through each one of the vertices of the triangle (see Proposition~\ref{def-TA-T}). Having that 
$$
\psi \cdot (f_A,f_B,-f_C)^T=0,
$$
we denote by $\Gamma$ the zero locus of the section of $\TA(N+1)$ induced by the syzygy $(f_A,f_B,-f_C)$. Notice that $\Gamma$ cannot have a 1-dimensional component, or else the considered section could be written as the product $g\cdot h$, with $g\in H^0(\cO_{\P^2}(\alpha))$, with $\alpha >0 $, and $h \in H^0(\T_\cA(N+1-\alpha))$. The homogeneous form $g$ would therefore be a common factor of $(f_A,f_B,-f_C)$ as well, which is impossible, as the three families of removed lines have no common one.
This implies that the cokernel of the map $ \cO_{\P^2} \hookrightarrow \T_\cA$ is a rank $1$ torsion-free sheaf, that is, a twist of the ideal sheaf of the vanishing locus $\Gamma$ of the considered map, \textit{i.e.}~of the form $\mathcal{I}_\Gamma(d)$ for some integer $d$.

Moreover, the twist can be determined by computing the first Chern classes in the short exact sequence. Indeed, recall that the Chern polynomials in the sequence are related in the following way: 
$$
c_t(\T_\cA) = c_t(\cO_{\P^2}(-1-N)) \cdot c_t(\mathcal{I}_\Gamma(d)),
$$
which implies that their first Chern classes satisfy the relation
$$
d = c_1(\mathcal{I}_\Gamma(d)) = c_1(\T_\cA) - c_1(\cO_{\P^2}(-1-N)) = N+2-a-b-c.
$$

Consider the \textit{singular locus} of $\mathcal{F}$, denoted by $\sing(\mathcal{F})$, that is defined (for any coherent sheaf in general) as the set of points where $\mathcal{F}$ fails to be locally free. It is known that
$$
\begin{array}{rl}
\sing(\mathcal{F}) & = \left\{ x \in \P^2 \: |\: \mathcal{F} \:\mbox{is not a free}\: \cO_{\P^2,x}-\mbox{module} \right\}\vspace{2mm}\\
& = \bigcup_{p=1}^{\dim 2} \Supp \mathcal{E}xt^p(\mathcal{F},\cO_{\P^2}), 
\end{array}
$$
where $\Supp$ stands for the scheme-theoretic support of the sheaf (see \cite[Lemma 2.1.4.1]{Ok}).

Dualizing the bottom row in Diagram (\ref{diag-syz}), we get
\begin{equation}\label{seq-extsupp}
\xymatrix@C=16pt{
  \mathcal{E}xt^1(\mathcal{I}_{T}(-1),\cO_{\P^2}) \ar[r]& \mathcal{E}xt^1(\mathcal{F},\cO_{\P^2}) \ar[r]&  \mathcal{E}xt^1(\mathcal{I}_{\Gamma}(N+2-a-b-c),\cO_{\P^2}) \ar[r]& 0\rlap{.}}
\end{equation}

As $\Gamma$ is the zero locus of a rank $2$ vector bundle and  $T$ is a reduced 0-dimensional scheme, then 
$$\mathcal{E}xt^1(\mathcal{I}_{\Gamma}(N+2-a-b-c),\cO_{\P^2})=\cO_{\Gamma} \quad\text{and}\quad 
\mathcal{E}xt^1(\mathcal{I}_T(-1),\cO_{\P^2})=\cO_{T}.$$
Furthermore, dualizing the central vertical sequence in Diagram (\ref{diag-syz}), we get
$$
\xymatrix@C=16pt{
  0 \ar[r]& \mathcal{F}^\lor \ar[r]& \cO_{\P^2}(a)\oplus \cO_{\P^2}(b)\oplus \cO_{\P^2}(c) \ar[rr]^-{\;(f_A,f_B,-f_C)\;}&& \cO_{\P^2}(1+N) \ar[r]& \mathcal{E}xt^1(\mathcal{F},\cO_{\P^2}) \ar[r]& 0\rlap{,}}
$$
which implies that $$\mathcal{E}xt^1(\mathcal{F},\cO_{\P^2})=\cO_{T_{\rem}}.$$
This gives a surjective map: $\cO_{T_{\rem}}\to \cO_{\Gamma}. $
Moreover, $\Supp \mathcal{E}xt^1(\mathcal{I}_{T}(-1),\cO_{\P^2}) = T$ and $T\cap T_{\rem}=\emptyset$. This implies that $\Gamma=T_{\rem}$.

Finally, recall that, for an ideal sheaf $\mathcal{I}_\Gamma$, the length of the 0-dimensional scheme $\Gamma$ is given by the second Chern class $c_2(\mathcal{I}_\Gamma(d))$, for any integer $d$. In our case, by the Chern polynomial properties mentioned before, we get that 
 $$|T_{\rem}|=c_2(\mathcal{I}_\Gamma(2N+3-a-b-c))=c_2(\TA(N+1)).$$

Recall that, from Remark~\ref{chernClass}, we have 
\begin{equation}\label{maxChern}
|T| = (ab+bc+ac-a-b-c+1)-c_2(\T_{\mathcal{A}}),
\end{equation}
 from which we obtain the following formula relating 
$|T|$ and $|T_{\rem}|$:
\begin{equation}\label{triple-complementary}
|T| = N^2 - (N+2)(a+b+c-3)-3+ab+ac+bc-|T_{\rem}|.
\end{equation}

\begin{theo}\label{Trem-+}
Let $\cA$ be a triangular arrangement in $\Tr(a,b,c)$ obtained by deletion of a complementary triangular arrangement $\cA^{c}$ from $\mathcal{A}_3^3(N)$. Let $T_{\rem}$ be the inner triple points of $\cA^{c}$. The following holds:
\begin{enumerate}
    \item\label{Trem-+1} If\, $T_{\rem}=\emptyset$, then $\T_\cA$ is free.
\end{enumerate}
Furthermore, 
\begin{enumerate}[resume]
    \item\label{Trem-+2} If\, $2N-a-b-c+2\leq 0$, then $$T_{\rem}=\emptyset \text{ if and only if }\ \T_{\cA} \text{ is free}. $$
    \item\label{Trem-+3} If\, $c\geq a+b-1$, then $$ \T_{\cA} \text{ is free if and only if }\ |T_{\rem}|=(N-c+1)(N-a-b+2).$$ 
    Moreover, in this situation, the number of triple points for each removed line in the third family, which lives in the complementary arrangement, is equal to $N-a-b+2$.
\end{enumerate}
\end{theo}

\begin{remark}\leavevmode
  \begin{enumerate}
  \item If $2N-a-b-c+2> 0$, then $\T_\cA$ can be free even if  $T_{\rem}$ is not empty (see Example~\ref{example-trem} below).
\item In the cases \eqref{Trem-+1} and \eqref{Trem-+2} of Theorem~\ref{Trem-+}, when $\T_\cA$ is free,  its exponents are necessarily $(N+1,a+b+c-N-2)$. In case  \eqref{Trem-+3}, when $\T_\cA$ is free,  its exponents are necessarily $(c,a+b-1)$.
\end{enumerate}
  \end{remark}

\begin{proof}[Proof of Theorem~\ref{Trem-+}]
The first assertion is straightforward from Diagram (\ref{diag-syz}).

For the second assertion, restricting the first vertical exact sequence of Diagram (\ref{diag-syz}) to a line $\ell$, we get
$$
\xymatrix@C=16pt{
0 \ar[r]& \cO_{\ell}(-1-N) \ar[r]&  \TA\otimes \cO_{\ell}\ar[r]&  \mathcal{I}_{T_{\rem}}(N+2-a-b-c) \otimes \cO_{\ell} \ar[r]& 0\rlap{.}}$$
When $\ell$ is general, $\mathcal{I}_{T_{\rem}}(N+2-a-b-c) \otimes \cO_{\ell} \simeq \cO_{\ell}(N+2-a-b-c)$.

If $N+2-a-b-c \le -1-N$, or equivalently $2N-a-b-c+2<0$, this imposes that the generic splitting type is $\cO_{\ell}(-1-N) \oplus \cO_{\ell}(N+2-a-b-c)$. So, under the assumption $2N-a-b-c+2\leq 0$, a line $\ell$ is a jumping line if and only if $T_{\rem}\cap \ell$ is not empty. 
In other words, $\T_\cA$ is free with exponents $(N+1,a+b+c-N-2)$ if and only if $T_{\rem}$ is empty.

To prove the third assertion, we first remark that if $c\geq a+b-1$, then the minimal degree $d$ of a global section of $\T_{\cA}$ is equal to $a+b-1$ (see \cite[Theorem 1.2]{D2}). As a consequence, if $\T_{\cA}$ is free, then it is isomorphic to $\cO_{\P^2}(-c) \oplus \cO_{\P^2}(-a-b+1)$.  Directly from the freeness of $\T_{\cA}$ and the first column of Diagram (\ref{diag-syz}), we have that $T_{\rem}$ is a complete intersection with $|T_{\rem}|=(N-c+1)(N-a-b+2)$. As $N-c+1 \leq N-a-b+2$, the $N-c+1$ lines removed from the third family give one of the two generators of the ideal that defines $T_{\rem}$. By B\'{e}zout's theorem, this implies that $|T_{\rem}\cap \ell|=N-a-b+2$ for any removed line $\ell$ of the third family.

Reciprocally, if $|T_{\rem}|=(N-c+1)(N-a-b+2)$, then Equation (\ref{triple-complementary}) implies that $|T|=(a-1)(b-1)$. This can happen only when $T$ is a complete intersection of type $(a-1,b-1)$ and, by Theorem~\ref{CI}, we get that $\T_{\cA}$ is free.
\end{proof}

As a direct application of Theorem~\ref{Trem-+}, we now provide  a free equilateral RUA (which means that $a=b=c$ and the sides of the triangle $ABC$ belong to the arrangement) for any admissible pair of exponents.

\begin{cor}\label{equilateral}
 Let $n$ be a positive integer.
 For any pair of integers $(d_1,d_2)$ verifying $d_1+d_2=3n-1$ and $n\le d_1\le d_2\le 2n-1$, there exists an arrangement $\cA \in  \Tr(n,n,n)$  which is free with exponents $(d_1,d_2)$.
\end{cor}

\begin{proof}
\looseness=1 The full monomial arrangement $\mathcal{A}_3^3(n-1)$ is free with exponents $(n,2n-1)$. Starting from the arrangement $\mathcal{A}_3^3(n)$ and removing one inner line from each vertex such that $T_{\rem}=\emptyset$, we obtain a new arrangement $\cA_1\in \Tr(n,n,n)$. According to Theorem~\ref{Trem-+}, this arrangement is free with exponents $(n+1, 2n-2)$. In order to find a free arrangement $\cA_k\in \Tr(n,n,n)$ with exponents $(n+k, 2n-k-1)$ for $k\ge 1$ and $2k<n $, we apply the same method: we remove $k$ lines from each vertex of the arrangement $ \mathcal{A}_3^3(n+k-1) $ such that there is no inner triple point in the complementary arrangement (\textit{i.e.}~$T_{\rem}=\emptyset$). This is always possible thanks to the hypothesis $2k< n$, for instance by removing the $3k$ lines with equations 
$$ \prod_{0\le i\le k-1}(x-\zeta^i z)=0, \quad\prod_{0\le j\le k-1}(y-\zeta^j z)=0 \quad\text{and}\quad \prod_{0\le h\le k-1}(x-\zeta^{n+k-2-h} y)=0.$$
The second assertion of Theorem~\ref{Trem-+}, with $N=n+k-1$ and $a=b=c=n$, shows that this arrangement is free with exponents $(n+k, 2n-k-1)$.
\end{proof}

Freeness for RUAs is characterized in Theorem~\ref{Trem-+} when $c\ge a+b-1$.
Let us now study  the case $c<a+b-1$. By \cite[Theorem 1.2]{D2} again, if $c < a+b-1$, then the minimal degree $d$ of a global section of $\T_{\cA}$ satisfies $c\leq d \leq a+b-1$. Moreover, if $d=c$, then $\T_{\cA} \simeq \cO_{\P^2}(-c) \oplus \cO_{\P^2}(-a-b+1)$.

First of all, observe that if $\T_{\cA}$ is free, then it can be expressed as the direct sum
$$
\cO_{\P^2}(-c-k) \oplus \cO_{\P^2}(-a-b+1+k), \quad\text{with}\quad k \leq \frac{a+b-c-1}{2},
$$
which implies that $c+k\leq a+b -1-k$.

\begin{pro}\label{Trem-line}
Let $\cA$ be a RUA such that $c_2(\T_{\cA})=(c+k)(a+b-k-1)$. Then
\begin{enumerate}
    \item\label{Trem-line1} if there exists a line $\ell$ such that $|T_{\rem}\cap \ell| = N-a-b+2+k$ or $|T_{\rem}\cap \ell| = N-c+1-k$, then $\cA$ is free;
    \item\label{Trem-line2} there exists no line $\ell \subset \P^2$ such that 
    $$
    N-a-b+2+k < |T_{\rem}\cap \ell| < N-c+1-k;
    $$
    \item\label{Trem-line3} if there exists a line $\ell$ such that  $|T_{\rem}\cap \ell| > N-c+1-k$, then $\cA$ is not free.
\end{enumerate}
\end{pro}

\begin{proof}
  Let us prove \eqref{Trem-line1}. Recall that, by \cite{EF}, if $\mathcal{E}$ is a vector bundle on $\P^2$ and $\mathcal{E}_{|\ell} \simeq \cO_\ell(-\alpha) \oplus \cO_\ell(-\beta)$ for any line $\ell$, then $c_2(\mathcal{E})\geq \alpha\beta$ and, moreover, equality holds if and only if $\mathcal{E} \simeq \cO_{\P^2}(-\alpha) \oplus \cO_{\P^2}(-\beta)$.
  
Suppose that $|T_{\rem}\cap \ell| = N-a-b+2+k$, which, by restricting to $\ell$ the left vertical exact sequence in Diagram~(\ref{diag-syz}), gives us the short exact sequence
$$\xymatrix@C=16pt{
0 \ar[r]& \cO_{\ell}(-N-1) \ar[r]& (T_{\cA})_{|\ell}\ar[r]&  \cO_{\ell}(-c-k) \oplus \cO_{T_{\rem}\cap \ell} \ar[r]& 0\rlap{.}}
$$
This implies that $(T_{\cA})_{|\ell} \simeq \cO_\ell(-\alpha) \oplus \cO_\ell(-\beta)$ with $c+k \leq \alpha \leq \beta \leq a+b+1-k$ and $\alpha + \beta = a+b+c-1$. If $\alpha > c+k$, we would have that $c_2(\mathcal{E})< \alpha\beta$, contradicting the cited result.

The case $|T_{\rem}\cap \ell| = N-c+1-k$ and the other items are proven by analogous arguments.
\end{proof}

\begin{exa}      \label{example-trem}  
We will now see three examples which describe the following different situations:\begin{enumerate}[label={Case \arabic*:}, ref={\arabic*}, leftmargin=!, align=left]
\item\label{exa-trem-1} The complementary arrangement has no inner triple point, and therefore $\T_\cA$ is free.
\item\label{exa-trem-2} The complementary arrangement has inner triple points, and $\T_\cA$ is not free.
\item\label{exa-trem-3} The complementary arrangement has inner triple points, and $\T_\cA$ is free.
  \end{enumerate}

In order to make such example more clear, we will picture the triangular arrangements involved in them as follows. The vertical, horizontal and diagonal lines represent the inner lines of a complete triangular arrangement; \textit{i.e.}~each family has $N$ lines, as $N$ the order of the root of the unity that we are considering. We do not draw the three lines which are the sides of the triangle. Notice that for the horizontal and vertical lines, each drawn line corresponds to a unique line of the arrangement. On the contrary, except for the main diagonal, the diagonal lines are ``paired'' because of the periodicity of the exponential function on the complex field. Indeed, the $\supth{i}$ diagonal line, counted starting from the top left corner, for $1\leq i \leq N$, is paired with the $\supth{(i+N)}$ line of the representation. The lines removed  to obtain the triangular arrangement from the complete one will be the dashed ones.

\subsubsection*{\it Case~\ref{exa-trem-1}: $a=b=c=5$ and $N=6$ ($a'=b'=c'=2$)} 
There is no inner triple point in the complementary arrangement, the arrangement $\cA$ is free with exponents $(7,7)$, and  $|T|=12$.

\begin{figure}[h!]
  \label{trem1}
  \begin{minipage}{0.48\textwidth}
\centering
\begin{tikzpicture}[line cap=round,line join=round,>=triangle 45,x=0.5cm,y=0.5cm]
\definecolor{uququq}{rgb}{0.25,0.25,0.25}
\definecolor{qqqqff}{rgb}{0,0,1}
\clip(-1,-1.2) rectangle (6,5.8);
\draw [domain=-1:6] plot(\x,{(--1-0*\x)/1});
\draw [domain=-1:6] plot(\x,{(--2-0*\x)/1});
\draw (5,-1.2) -- (5,5.8);
\draw [domain=-1:6] plot(\x,{(--3-0*\x)/1});
\draw (2,-1.2) -- (2,5.8);
\draw (3,-1.2) -- (3,5.8);
\draw (4,-1.2) -- (4,5.8);
\draw [line width=1.6pt,dash pattern=on 4pt off 4pt] (1,-1.2) -- (1,5.8);
\draw [line width=1.6pt,dash pattern=on 4pt off 4pt,domain=-1:6] plot(\x,{(--4-0*\x)/1});
\draw [line width=1.6pt,dash pattern=on 4pt off 4pt] (0,-1.2) -- (0,5.8);
\draw [domain=-1:6] plot(\x,{(-0-0*\x)/1});
\draw [domain=-1:6] plot(\x,{(-10-0*\x)/-5});
\draw [line width=1.6pt,dash pattern=on 4pt off 4pt,domain=-1:6] plot(\x,{(--5-0*\x)/1});
\draw [line width=1.6pt,dash pattern=on 4pt off 4pt,domain=-1:6] plot(\x,{(-0--5*\x)/5});
\draw [line width=1.6pt,dash pattern=on 4pt off 4pt,domain=-1:6] plot(\x,{(--4--4*\x)/4});
\draw [line width=1.6pt,dash pattern=on 4pt off 4pt,domain=-1:6] plot(\x,{(-10.3--2.04*\x)/1.86});
\begin{scriptsize}
\fill [color=uququq] (2,0) circle (1.5pt);
\fill [color=uququq] (3,1) circle (1.5pt);
\fill [color=uququq] (3,0) circle (1.5pt);
\fill [color=uququq] (4,2) circle (1.5pt);
\fill [color=uququq] (4,1) circle (1.5pt);
\fill [color=uququq] (4,0) circle (1.5pt);
\fill [color=uququq] (5,1) circle (1.5pt);
\fill [color=uququq] (5,2) circle (1.5pt);
\fill [color=uququq] (2,1) circle (1.5pt);
\fill [color=uququq] (3,2) circle (1.5pt);
\fill [color=uququq] (4,3) circle (1.5pt);
\fill [color=uququq] (5,3) circle (1.5pt);
\end{scriptsize}
\end{tikzpicture}
\caption{Case~\ref{exa-trem-1}: $|T|=12$, $|T_{\rem}|=0$}
  \end{minipage}\hfill
   \begin{minipage}{0.48\textwidth}
\label{trem2}
\centering
\definecolor{uququq}{rgb}{0.25,0.25,0.25}
\definecolor{ffqqtt}{rgb}{1,0,0.2}
\definecolor{xdxdff}{rgb}{0.49,0.49,1}
\begin{tikzpicture}[line cap=round,line join=round,>=triangle 45,x=0.3cm,y=0.3cm]
\clip(-1.7,-1.76) rectangle (8.66,8.86);
\draw [line width=1.6pt,dash pattern=on 2pt off 2pt] (0,-1.76) -- (0,8.86);
\draw [line width=1.6pt,dash pattern=on 2pt off 2pt] (1,-1.76) -- (1,8.86);
\draw [line width=1.6pt,dash pattern=on 2pt off 2pt] (2,-1.76) -- (2,8.86);
\draw (4,-1.76) -- (4,8.86);
\draw (4,-1.76) -- (4,8.86);
\draw [line width=1.6pt,dash pattern=on 2pt off 2pt] (3,-1.76) -- (3,8.86);
\draw [domain=-1.7:8.66] plot(\x,{(-0-0*\x)/1});
\draw [domain=-1.7:8.66] plot(\x,{(--2-0*\x)/1});
\draw [line width=1.6pt,dash pattern=on 2pt off 2pt,domain=-1.7:8.66] plot(\x,{(--4-0*\x)/1});
\draw [line width=1.6pt,dash pattern=on 2pt off 2pt,domain=-1.7:8.66] plot(\x,{(--5-0*\x)/1});
\draw [line width=1.6pt,dash pattern=on 2pt off 2pt,domain=-1.7:8.66] plot(\x,{(--6-0*\x)/1});
\draw [line width=1.6pt,dash pattern=on 2pt off 2pt,domain=-1.7:8.66] plot(\x,{(--7-0*\x)/1});
\draw [domain=-1.7:8.66] plot(\x,{(--1-0*\x)/1});
\draw [domain=-1.7:8.66] plot(\x,{(--2-0*\x)/1});
\draw [domain=-1.7:8.66] plot(\x,{(--3-0*\x)/1});
\draw (5,-1.76) -- (5,8.86);
\draw (6,-1.76) -- (6,8.86);
\draw (7,-1.76) -- (7,8.86);
\draw [line width=1.6pt,dash pattern=on 2pt off 2pt,domain=-1.7:8.66] plot(\x,{(-0-1*\x)/-1});
\draw [line width=1.6pt,dash pattern=on 2pt off 2pt,domain=-1.7:8.66] plot(\x,{(-1-1*\x)/-1});
\draw [line width=1.6pt,dash pattern=on 2pt off 2pt,domain=-1.7:8.66] plot(\x,{(-2-1*\x)/-1});
\draw [line width=1.6pt,dash pattern=on 2pt off 2pt,domain=-1.7:8.66] plot(\x,{(--1-1*\x)/-1});
\draw [line width=1.6pt,dash pattern=on 2pt off 2pt,domain=-1.7:8.66] plot(\x,{(--6-1*\x)/-1});
\draw [line width=1.6pt,dash pattern=on 2pt off 2pt,domain=-1.7:8.66] plot(\x,{(--7-1*\x)/-1});
\draw [line width=1.6pt,dash pattern=on 2pt off 2pt,domain=-1.7:8.66] plot(\x,{(-7-1*\x)/-1});
\begin{scriptsize}
\fill [color=uququq] (4,0) circle (1.5pt);
\fill [color=uququq] (4,1) circle (1.5pt);
\fill [color=uququq] (4,2) circle (1.5pt);
\fill [color=uququq] (5,3) circle (1.5pt);
\fill [color=uququq] (5,2) circle (1.5pt);
\fill [color=uququq](5,1) circle (1.5pt);
\fill [color=uququq](5,0) circle (1.5pt);
\fill [color=uququq](6,0.98) circle (1.5pt);
\fill [color=uququq](5.96,2) circle (1.5pt);
\fill [color=uququq](6,3.04) circle (1.5pt);
\fill [color=uququq](7,2.08) circle (1.5pt);
\fill [color=uququq](7,3.08) circle (1.5pt);
\draw[color=black] (3.16,10.7) node {$l$};
\draw[color=black] (6.88,10.66) node {$g_1$};
\fill [color=ffqqtt] (0,7) circle (2.5pt);
\fill [color=ffqqtt] (3,4) circle (2.5pt);
\fill [color=ffqqtt] (2,4) circle (2.5pt);
\fill [color=ffqqtt] (3,5) circle (2.5pt);
\end{scriptsize}
\end{tikzpicture}
\caption{Case~\ref{exa-trem-2}: $|T|=12, |T_{\rem}|=4$}
\end{minipage}
   \end{figure}

\subsubsection*{\it Case~\ref{exa-trem-2}: $a=b=c=5$ and $N=8$ ($a'=b'=c'=4$)} The number of inner triple points of the complementary arrangement
is minimal, and it is equal to 
$$|T_{\rem}|=0+1+1+2=4.$$ The arrangement $\cA$ is free with exponents $(7,7)$, and $|T|=12$.

\subsubsection*{\it Case~\ref{exa-trem-3}: $a=3, b=4, c=5$  and $N=7$ ($a'=5, b'=4, c'=3$)} The number of inner triple points of the complementary arrangement
is minimal, and it is equal to
$$|T_{\rem}|=2+2+2=6.$$ The arrangement $\cA$ is free with exponents $(5,6)$, and $|T|=6$.
\begin{figure}[h!]
  \label{trem3}
\centering
\definecolor{uququq}{rgb}{0.25,0.25,0.25}
\definecolor{qqqqff}{rgb}{0,0,1}
\definecolor{xdxdff}{rgb}{0.49,0.49,1}
\definecolor{fftttt}{rgb}{1,0.2,0.2}
\begin{tikzpicture}[line cap=round,line join=round,>=triangle 45,x=0.5cm,y=0.5cm]
\clip(-1.3,-3.1) rectangle (6.96,5.1);
\draw [domain=-1.3:6.96] plot(\x,{(-1-0*\x)/1});
\draw [domain=-1.3:6.96] plot(\x,{(-2-0*\x)/1});
\draw [line width=1.2pt,dash pattern=on 2pt off 2pt,domain=-1.3:6.96] plot(\x,{(-12.08--6*\x)/5.98});
\draw [line width=1.2pt,dash pattern=on 2pt off 2pt,domain=-1.3:6.96] plot(\x,{(-6.04--2.04*\x)/2.04});
\draw [line width=1.2pt,dash pattern=on 2pt off 2pt,domain=-1.3:6.96] plot(\x,{(--24.16--6*\x)/5.98});
\draw [line width=1.2pt,dash pattern=on 2pt off 2pt,domain=-1.3:6.96] plot(\x,{(-0-0*\x)/1});
\draw [line width=1.2pt,dash pattern=on 2pt off 2pt,domain=-1.3:6.96] plot(\x,{(--1-0*\x)/1});
\draw [line width=1.2pt,dash pattern=on 2pt off 2pt,domain=-1.3:6.96] plot(\x,{(--2-0*\x)/1});
\draw [line width=1.2pt,dash pattern=on 2pt off 2pt,domain=-1.3:6.96] plot(\x,{(--3-0*\x)/1});
\draw [line width=1.2pt,dash pattern=on 2pt off 2pt,domain=-1.3:6.96] plot(\x,{(--4-0*\x)/1});
\draw [line width=1.2pt,dash pattern=on 2pt off 2pt] (0,-3.1) -- (0,5.1);
\draw [line width=1.2pt,dash pattern=on 2pt off 2pt] (1,-3.1) -- (1,5.1);
\draw [line width=1.2pt,dash pattern=on 2pt off 2pt] (2,-3.1) -- (2,5.1);
\draw [line width=1.2pt,dash pattern=on 2pt off 2pt] (3,-3.1) -- (3,5.1);
\draw (4,-3.1) -- (4,5.1);
\draw (5,-3.1) -- (5,5.1);
\draw (6,-3.1) -- (6,5.1);
\draw [line width=1.2pt,dash pattern=on 2pt off 2pt,domain=-1.3:6.96] plot(\x,{(-16--4*\x)/4});
\draw [line width=1.2pt,dash pattern=on 2pt off 2pt,domain=-1.3:6.96] plot(\x,{(--3--1*\x)/1});
\begin{scriptsize}
\fill [color=fftttt] (2.06,0.04) circle (2.0pt);
\fill [color=fftttt] (3.02,1.01) circle (2.0pt);
\fill [color=fftttt] (3,0.04) circle (2.0pt);
\fill [color=fftttt] (0,4.04) circle (2.0pt);
\fill [color=uququq](3.98,-1) circle (1.5pt);
\fill [color=uququq](4,-2) circle (1.5pt);
\fill [color=uququq](5,-2) circle (1.5pt);
\fill [color=uququq](5,-1) circle (1.5pt);
\fill [color=uququq](5.98,-1) circle (1.5pt);
\fill [color=uququq](5.96,-2) circle (1.5pt);
\fill [color=qqqqff] (8.48,-8.56) circle (1.5pt);
\draw[color=qqqqff] (8.64,-8.3) node {$R$};
\fill [color=fftttt] (0,3) circle (2.0pt);
\fill [color=fftttt] (1,4) circle (2.0pt);
\end{scriptsize}
\end{tikzpicture}
\caption{Case~\ref{exa-trem-3}: $|T|=|T_{\rem}|=6$}
\end{figure}
\end{exa}

\section{Free arrangements in the non-complete triangle}\label{sec-novertex}

In the previous part of this work, we have always considered arrangements coming from full monomial ones by deletion, and, as specified, they all contain the three sides of the triangle.

In this section we will complete the study of triangular arrangements, proving that the only free non-complete triangular arrangements always have the maximal number of inner triple points. More precisely, we have the following theorem.

\begin{theo}
 \label{uncomplete} Let $\cA_0 \in \Tr(a,b,c)$, and let $(AB),(AC), (BC)$ be its three sides. 
 \begin{enumerate}
 \item If we remove one side, then: 
 \begin{itemize}
  \item $\cA_0\setminus (AB)$ is free if and only if 
 its set of inner triple points is a complete intersection $(a-1,b-1)$. Then its exponents are $(c,a+b-2)$.
 \item $\cA_0\setminus (AC)$ is free if and only if $b=c$ and
 its set of inner triple points is a complete intersection $(a-1,b-1)$. Then its exponents are $(b,a+b-2)$.
 \item $\cA_0\setminus (BC)$ is free if and only if $a=b=c$ and
 its set of inner triple points is a complete intersection $(a-1,a-1)$. Then its exponents are $(a,2a-2)$.
 \end{itemize}
  \item If we remove two sides, then: 
 \begin{itemize}
 \item $\cA_0\setminus [(AB)\cup (AC)]$ is free if and only if 
 its set of inner triple points is a complete intersection $(a-1,b-1)$. Then its exponents are $(c,a+b-3)$.
 \item $\cA_0\setminus [(AB)\cup (BC)]$ is free if and only if $b=c$ and
 its set of inner triple points is a complete intersection $(a-1,b-1)$. Then its exponents are $(b,a+b-3)$.
 \item $\cA_0\setminus [(AC)\cup (BC)]$ is free if and only if $a=b=c$ and
 its set of inner triple points is a complete intersection $(a-1,a-1)$. Then its exponents are $(a,2a-3)$.
 \end{itemize}
 \item If we remove three sides, then $\cA_0\setminus [(AB)\cup (AC)\cup (BC)]$ is free if and only if $a=b=c$ and
 its set of inner triple points is a complete intersection $(a-1,a-1)$. Then its exponents are $(a,2a-4)$, or we have $a=b=c=2$ with no triple point and its exponents are $(1,1)$. 
\end{enumerate}
\end{theo}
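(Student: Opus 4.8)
The plan is to reduce every statement to Theorem~\ref{CI} by means of Terao's Addition--Deletion Theorem \cite[Theorem 4.51]{OT}, keeping careful track of the exponents, and to get the converse implications from the Chern class formula of Remark~\ref{chernClass} together with the bound $|T|\le (a-1)(b-1)$ of Remark~\ref{bigger-split}. Three elementary facts come first. (i) Deleting one or several side lines does not change the set $T$ of inner triple points: an inner triple point lies on exactly one line through each of $A,B,C$ and on no side (two lines through a vertex meet only at that vertex, hence a side through an inner triple point would force two lines of another pencil to be concurrent off their vertex). (ii) The number of points of $\cA_0$ on a side is easily counted: $(AB)$ carries $A$, $B$ and the $c-1$ points cut by the inner lines through $C$, hence $c+1$ points; likewise $(AC)$ carries $b+1$ and $(BC)$ carries $a+1$ points. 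Thus the restriction of $\cA_0$, or of any uncomplete sub-arrangement of it still containing that side, to $(AB)$, $(AC)$, $(BC)$ has exponent $c$, $b$, $a$ respectively. (iii) If $T$ is a complete intersection $(a-1,b-1)$ then $T$ is exactly the set of the $(a-1)(b-1)$ intersection points of the inner lines through $A$ with those through $B$, and since each of them must carry an inner line through $C$ (which meets the inner lines through $A$, resp. $B$, in at most $a-1$, resp. $b-1$, points) one gets $c\ge\max(a,b)$; this dissymmetry is exactly why the side $(AB)$ behaves differently from $(AC)$ and $(BC)$.

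For the ``if'' parts, assume $T$ is a complete intersection $(a-1,b-1)$. By Theorem~\ref{CI}, $\cA_0$ is free with exponents $(a+b-1,c)$. Deleting $(AB)$: by (ii) the restriction exponent is $c$, which is an exponent of $\cA_0$, so Addition--Deletion yields that $\cA_0\setminus(AB)$ is free with exponents $(a+b-2,c)$. Under the extra hypothesis $b=c$ one may next delete $(AC)$ (restriction exponent $b=c$, again an exponent) and get $(a+b-3,c)$; under $a=b=c$ one may moreover delete $(BC)$. Performing the deletions always in the order that removes a side whose restriction exponent coincides with the current ``$c$-exponent'' --- if necessary after using the symmetry $A\leftrightarrow C$ or $B\leftrightarrow C$ of the triangle, which is legitimate precisely when $b=c$, resp. $a=b=c$, because only then does the relabelled arrangement again have $T$ as a complete intersection of the required type --- one reaches all exponent values asserted in (1)--(3). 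The last item, $a=b=c=2$ with $|T|=0$, is simply the three inner lines in general position, which is free with exponents $(1,1)$.

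For the ``only if'' parts I would argue by contraposition through a Chern class count. Take for instance $\cA:=\cA_0\setminus(AB)$ and suppose it is free, $\T_{\cA}=\cO_{\P^2}(-e_1)\oplus\cO_{\P^2}(-e_2)$ with $e_1\le e_2$. Its singular points are the vertices, now of multiplicities $a,b,c+1$, together with the unchanged $T$ (the $c-1$ double points on $(AB)$ having become smooth), so the formula of Remark~\ref{chernClass} applied to $\cA$ gives
\[
e_1e_2=c_2(\T_{\cA})=\binom{a+b+c-2}{2}-\binom{a-1}{2}-\binom{b-1}{2}-\binom{c}{2}-|T|.
\]
The pencil of $c+1$ lines through $C$ survives in $\cA$, which forces $c$ to bound the exponents; writing the product $(c+t)(a+b-2-t)$ with $t\ge 0$ and $e_1\ge 0$, it is minimal precisely for $\{e_1,e_2\}=\{c,a+b-2\}$, and comparing this with the displayed value and with $|T|\le(a-1)(b-1)$ forces $|T|=(a-1)(b-1)$ and $\{e_1,e_2\}=\{c,a+b-2\}$. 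By (iii), $|T|=(a-1)(b-1)$ says exactly that $T$ is the complete intersection $(a-1,b-1)$. For $\cA_0\setminus(AC)$ and $\cA_0\setminus(BC)$ the same computation, with restriction exponent $b$, resp. $a$, in place of $c$, is consistent only if in addition $b=c$, resp. $a=c$ and hence (after the same reduction) $a=b=c$; this is where the supplementary hypotheses originate. The cases of two and of three deleted sides follow by iterating the one-side argument, each deletion dropping exactly one exponent and leaving $T$ untouched, the $\mathrm{Tr}(2,2,2)$ situation being settled by inspection.

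The main obstacle is the necessity half, and specifically the step that upgrades ``$\cA$ is free'' to ``$\{e_1,e_2\}$ is the extreme pair and $|T|$ is maximal''. The bound coming merely from the surviving pencil through $C$ is too weak to pin down the exponents; what is really needed is the sharper fact that a free line arrangement containing a full pencil of $c+1$ lines through a point, and whose second Chern class is forced to be at least $c(a+b-2)$, must actually have $c$ among its exponents --- equivalently, that the splitting type of $\T_{\cA}$ along the two sides through $C$ is the expected unbalanced one. Establishing this cleanly, either via a restriction theorem along those lines in the spirit of \cite[Theorem 3.1]{WY}, or via a corrected Fourier--Mukai sequence for $\cA$ (which carries an extra $\cO_{(AB)}$-type summand coming from the deleted vertex and so complicates the cohomology bookkeeping of the proof of the Proposition), is where the real work lies; once it is in place, the combinatorial bookkeeping above and the symmetry of the triangle dispose of all the remaining cases.
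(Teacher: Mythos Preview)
Your ``if'' direction is fine and is essentially what the paper does: start from Theorem~\ref{CI} and peel off sides one by one with Addition--Deletion, the restriction exponent being read off from the number of points on the deleted side.

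The genuine gap is in the ``only if'' direction, and you identify it yourself. The Chern--class/pencil argument is \emph{not} strong enough to force $\{e_1,e_2\}=\{c,a+b-2\}$ and $|T|=(a-1)(b-1)$. A concrete numerical obstruction: take $a=4$, $b=c=5$ and remove $(AB)$. Then $e_1+e_2=12$, the pencil through $C$ only gives $\max(e_1,e_2)\ge 5$, and $c_2(\T_{\cA})=47-|T|$ with $|T|\le 12$. The pair $(e_1,e_2)=(6,6)$ with $|T|=11$ satisfies every constraint you wrote down, so nothing in your argument excludes it. The claim ``it is minimal precisely for $\{e_1,e_2\}=\{c,a+b-2\}$'' is false whenever $c<a+b-2$, because the product $(c+t)(a+b-2-t)$ \emph{increases} for small $t>0$ in that range.

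The paper closes this gap not by sharpening the Chern bound but by a restriction argument that pins down the splitting type on a specific line. The key tool is the deletion exact sequence of \cite[Proposition~5.1]{FV},
\[
0\longrightarrow \T_{\cA\cup L}\longrightarrow \T_{\cA}\longrightarrow \cO_L(2-a-b)\longrightarrow 0,
\]
for $L=(AB)$. If $c\le a+b-2$ the mere existence of this surjection forces $(\T_{\cA})_{|L}=\cO_L(-c)\oplus\cO_L(2-a-b)$, so freeness of $\cA$ gives the exponents $(c,a+b-2)$ directly; then Addition--Deletion makes $\cA\cup L=\cA_0$ free with exponents $(c,a+b-1)$ and Theorem~\ref{CI} finishes. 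If $c\ge a+b-1$ the paper instead restricts to the side $(BC)$ and reads off the splitting type from the induced multiarrangement via \cite[Theorem~3.1]{WY}. Parts~II and~III are then handled inductively from Part~I by the same mechanism, with a separate low--$a$ analysis. So your instinct that \cite{WY} is the missing ingredient is right, but the paper uses it in tandem with the \cite{FV} sequence rather than through a global Chern computation; this is what you would need to supply to turn your sketch into a proof.
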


\begin{proof} We divide the proof in three parts:  $\cA$ is a triangular arrangement with one side removed (Part~\ref{part1}),   two sides removed (Part~\ref{part2}) or three sides removed (Part~\ref{part3})
from a complete triangular arrangement  $\cA_0\in \Tr(a,b,c)$.
\begin{enumerate}[wide, label=\Roman*., ref=\Roman*]
\item\label{part1}  Let us first assume  that $\cA$ contains exactly two sides, and let $L$ be the missing one. Then $\cA\cup L=\cA_0 \in \Tr(a,b,c)$. It is clear that 
 $\cA$ and $\cA\cup L$ have the same set $T$ of inner triple points.
 Let us denote by $t_{\cA}$ and  $t_{\cA \cup L}$, respectively, the number of triple points of $\cA$ and $\cA \cup L$ counted with multiplicities. Then we have 
\begin{itemize}
\item $t_{\cA \cup L}=|T|+\binom{a}{2}+\binom{b}{2}+\binom{c}{2}$;
 \item $t_{\cA}=|T|+\binom{a}{2}+\binom{b-1}{2}+\binom{c-1}{2}$ if $L=(BC)$; 
 \item $t_{\cA}=|T|+\binom{a-1}{2}+\binom{b}{2}+\binom{c-1}{2}$ if $L=(AC)$; 
 \item $t_{\cA}=|T|+\binom{a-1}{2}+\binom{b-1}{2}+\binom{c}{2}$ if $L=(AB)$.
\end{itemize}
The difference $(t_{\cA \cup L}-t_{\cA})$ is $b+c-2$, $a+c-2$ or $a+b-2$, respectively, when $L=(BC), (AC)$ or $(AB)$.

First assume that $L=(AB)$. 
By Addition-Deletion, we have an exact sequence
$$\xymatrix@C=16pt{
0\ar[r]&\T_{\cA \cup L} \ar[r]& \T_{\cA}\ar[r]&  \cO_L(2-a-b) \ar[r]&0.}
$$ 

If $c\le a+b-2$, the surjection of $\T_{\cA}$  on $\cO_L(2-a-b) $ induces
$(\T_{\cA})_{\mid L}=\cO_L(-c)\oplus \cO_L(2-a-b)$ and, if we suppose $\cA$ to be free, we have  $\TA=\cO_{\P^2}(-c)\oplus \cO_{\P^2}(2-a-b)$. Again by Addition-Deletion, we have
 $\mathcal{T}_{\cA \cup L}=\cO_{\P^2}(-c)\oplus \cO_{\P^2}(1-a-b)$ and, according to  Theorem~\ref{CI}, the set of inner triple points is a complete intersection $(a-1,b-1)$.
 
If $c\ge a+b-1$, we consider the multi-arrangement given by the restriction of $\cA$ to the side $(CB)$. The multiplicities of this multi-arrangement
 are $(c,1,\ldots, 1,b-1)$ (with $a-1$ multiplicities of type ``$1$''). This implies that the splitting type  of $\mathcal{T}_{\cA}$ on the side $l=(BC)$ 
is $\cO_l(-c)\oplus \cO_l(2-a-b)$ by \cite[Theorem 3.1]{WY}. 
 Then  $\cA$ is free if and only if $\TA=\cO_{\P^2}(-c)\oplus \cO_{\P^2}(2-a-b)$. By Addition-Deletion again,
$\mathcal{T}_{\cA \cup L}=\cO_{\P^2}(-c)\oplus \cO_{\P^2}(1-a-b)$, and  the set of inner triple points is a complete intersection $(a-1,b-1)$.

 When  $L=(AC)$ or $L=(BC)$, the proofs are analogous. The additional equalities $b=c$ or $a=b=c$ come from the fact that otherwise, by permuting the roles of $a,b,c$, the gaps $|a+c-1-b|$ or $|b+c-1-a|$ would be strictly bigger than the maximal one, which is $|a+b-1-c|$ by Remark~\ref{bigger-split}.

\item\label{part2} Let us now assume that $\cA$ contains only one side, and let $L$ and $L^{'}$ be the missing sides. Then $\cA\cup L\cup L^{'}=\cA_0 \in \Tr(a,b,c)$.  Let $t_{\cA}$, $t_{\cA \cup L}$ and $t_{\cA \cup L\cup L^{'}}$, respectively, be the number of triple points of $\cA$, $\cA \cup L$ and $\cA \cup L\cup L^{'}$, respectively, counted with multiplicities. Then we have
\begin{itemize}
\item $t_{\cA \cup L\cup L^{'}}=|T|+\binom{a}{2}+\binom{b}{2}+\binom{c}{2}$;
 \item $t_{\cA }=|T|+\binom{a-2}{2}+\binom{b-1}{2}+\binom{c-1}{2}$ if $L=(AB)$ and $L^{'}=(AC)$; 
 \item $t_{\cA}=|T|+\binom{a-1}{2}+\binom{b-2}{2}+\binom{c-1}{2}$ if $L=(BC)$ and $L^{'}=(AB)$; 
 \item $t_{\cA}=|T|+\binom{a-1}{2}+\binom{b-1}{2}+\binom{c-2}{2}$ if $L=(AC)$ and $L^{'}=(BC)$. 
\end{itemize}
The difference $(t_{\cA \cup L}-t_{\cA})$ is equal to $a+b-3$ when  $L=(AB)$ and $L^{'}=(AC)$, $a+c-3$ when $L=(AC)$ and $L^{'}=(BC)$, and it is $b+c-3$ when $L=(BC)$ and $L^{'}=(AB)$.

First assume that  $L=(AB)$ and $L^{'}=(AC)$. 
By \cite[Proposition 5.1]{FV} we have an exact sequence
$$\xymatrix@C=16pt{
  0\ar[r]&\T_{\cA \cup L} \ar[r]& \T_{\cA}\ar[r]&  \cO_L(3-a-b) \ar[r]&0.}
$$ 

If $c\le a+b-3$, the surjection of $\T_{\cA}$  on $\cO_L(3-a-b) $ induces
$(\T_{\cA})_{\mid L}=\cO_L(-c)\oplus \cO_L(3-a-b)$, and if we suppose $\cA$ to be free, it gives $\TA=\cO_{\P^2}(-c)\oplus \cO_{\P^2}(3-a-b)$. By Addition-Deletion,
$\cA \cup L$ is free with exponents $(c, a+b-2)$, and, according to Part~\ref{part1},  the set of inner triple points is a complete intersection of type $(a-1,b-1)$.
 
If $c\ge a+b-2$, we consider the other exact sequence given by Addition-Deletion, \textit{i.e.}
\begin{equation}
 \label{trois-a-c}
\xymatrix@C=16pt{
  0\ar[r]&\T_{\cA \cup L^{'}} \ar[r]& \T_{\cA}\ar[r]&  \cO_{L^{'}}(3-a-c)\ar[r]& 0.}
\end{equation}  

If $a=2$, considering the multiplicities of the multi-arrangement obtained by restricting $\cA$ onto the unique line through $A$, we find $|T|$ double points and $c+b-1-2|T|$ simple points.  Notice that since $|T|\le b-1$, the inequality
$$
2(c+b-1-|T|)-1 \geq b+c
$$
is always satisfied, except when $|T|=b-1$ and $b=c$. If these last two conditions hold, $|T|$ is a complete intersection of type $(1,b-1)$, and the corresponding arrangement is free with exponents $(b,b-1)$ (see \cite[Section 2.2]{AFV} for instance).

Let us now suppose that either $|T|<b-1$ or $b<c$. If $\cA$ is free, then it must have exponents $(1+ |T|,$  $c+b-2-|T|)$ by \cite[Theorem 3.1]{WY}.

If $b<c$ but $|T|=b-1$, then $|T|$ is again a complete intersection, and relating as before the bundle $\T_{\cA}$ to the vector bundle $\T_{\cA \cup L \cup L'}$ through Addition-Deletion, we get the required exponents.

Let us now suppose that $|T|<b-1$; it implies $-1-|T| \geq 1-c$ because $b\le c$. If we have a strict inequality, then we have that $2-c-b+|T| = 1-c$ thanks to the surjective map (\ref{trois-a-c}), which implies that $|T|=b-1$ and hence a contradiction. If the equality holds, then we must have that $|T|=b-2$ and $b=c$, which means that if $\cA$ is free, then $\T_{\cA} = \cO_{\P^2}(-b) \oplus \cO_{\P^2}(1-b)$.  This would imply, using Addition-Deletion, that $\T_{\cA \cup L'} = \cO_{\P^2}(-b) \oplus \cO_{\P^2}(-b)$, which leads to a contradiction to Part~\ref{part1} because $T$ is not a complete intersection $(1,b-1)$.

Let us now assume  that $a\ge 3$.

Since $a\ge 3$, it is clear that $b\le a+c-3$ and the surjection of $\T_{\cA}$ on $\cO_{L^{'}}(3-a-c) $ induces $(\T_{\cA})_{\mid L^{'}}=\cO_{L^{'}}(-b)\oplus \cO_{L^{'}}(3-a-c)$, and if we suppose $\cA$ to be free, we obtain $\TA=\cO_{\P^2}(-b)\oplus \cO_{\P^2}(3-a-c)$.  Then, by Addition-Deletion, ${\cA \cup L^{'}}$ is free with exponents $(b, a+c-2)$. According to Part~\ref{part1}, this occurs if and only if $b=c$ and $T$ is a complete intersection $(a-1,b-1)$.

If we remove the lines $(AC)$ and $(BC)$ or the lines $(AB)$ and $(BC)$, the proofs are analogous. The additional equalities $b=c$ or $a=b=c$ come from the fact that otherwise, by permuting the roles of $a,b,c$, the gaps $|a+c-2-b|$ or $|b+c-2-a|$ would be strictly bigger than the maximal one, which is $|a+b-2-c|$ by Remark~\ref{bigger-split}.

\item\label{part3}
Let us now assume  that  $\cA$ contains no side of the triangle, and let $L=(AB)$, $L^{'}=(AC)$ and $L^{''}=(BC)$ denote the missing sides. Then 
$\cA\cup L\cup L^{'} \cup L^{''}=\cA_0 \in \Tr(a,b,c)$. 
Consider the exact sequence 
$$\xymatrix@C=16pt{
  0\ar[r]& \T_{\cA \cup L^{''}} \ar[r]& \T_{\cA}\ar[r]&  \cO_{L^{''}}(4-b-c)\ar[r]& 0.}
$$

If $a\le b+c-4$, then, by the same technique used before, $ \cA$ is free with exponents $(a,b+c-4)$, implying that $\cA \cup L^{''}$ is free with exponents $(a,b+c-3)$. Since the gap for this last splitting is bigger than the gap given by the exponents $(c,a+b-3)$, this proves, according to Part~\ref{part2}, that $\cA$ is free if and only if $a=b=c$ and $T$ is a complete intersection $(a-1,a-1)$.
 
If $a \ge b+c-3$, the possibilities are $(a,b,c)=(2,2,2)$, $(a,b,c)=(2,2,3)$ or $(a,b,c)=(3,3,3)$.The first case, corresponding to three lines, is free with exponents $(1,1)$ if there is no triple point (but this case corresponds to an arrangement in $\Tr(1,1,1)$), and with exponents $(0,2)$ if there is one triple point.  The second case, corresponding to four lines, is free with exponents $(1,2)$ if and only if there is one triple point, \textit{i.e.}~$T$ is a complete intersection $(1,1)$; it cannot be free with exponents $(0,3)$ because the $|T|$ would be $3$, which is impossible for this arrangement.  The third case, corresponding to six lines, is free with exponents $(2,3)$ if and only if $|T|=4$ is a complete intersection $(2,2)$; it cannot be free with exponents $(1,4)$ because $|T|$ would be $6$, which is impossible for this arrangement.
\qedhere
\end{enumerate}
\end{proof}

As a direct consequence of the previous result, we can make explicit the following cases, which describe when we have the same number of lines passing through each vertex.

\begin{cor} Up to a linear change of coordinates, the following hold: 
\begin{itemize}
 \item The only free triangular arrangement of\, $3n-1$ lines passing through three points, with $n-1$ inner lines through each vertex, plus two sides,
 is $\cA^2_3(n)$. 
 \item The only free triangular arrangement of\, $3n-2$ lines passing through three points,  with $n-1$ inner lines through each vertex, plus one side,
 is $\cA^1_3(n)$.
 \item The only free arrangement of\, $3n-3$ lines passing through three points, with $n-1$ inner lines through each vertex, and no side,
 is $\cA^0_3(n)$.
\end{itemize}
\end{cor}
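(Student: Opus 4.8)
The plan is to deduce the statement from Theorem~\ref{uncomplete} applied in the balanced case $a=b=c$, and then to strengthen the numerical freeness criterion into an actual identification of the arrangement. If $\cA$ has exactly $n-1$ inner lines through each of three non-collinear points $A,B,C$, then adding back the missing sides produces a complete arrangement $\cA_0\in\mathrm{Tr}(n,n,n)$, and the three items of the corollary are the cases in which $\cA$ equals $\cA_0$ with one, two, or all three sides removed --- i.e. Parts I, II and III of Theorem~\ref{uncomplete} with $a=b=c=n$. In this symmetric situation all the auxiliary equalities occurring there (``$b=c$'', ``$a=b=c$'') hold automatically, so in each of the three cases $\cA$ is free if and only if its set $T$ of inner triple points is a complete intersection of type $(n-1,n-1)$; the finitely many small exceptions are exactly the explicit list in Theorem~\ref{uncomplete}(3), which I would handle by hand. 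For the converse direction, Corollary~\ref{reflection-012} already says that the monomial arrangements $\cA^0_3(n),\cA^1_3(n),\cA^2_3(n)$ are free and have such a $T$. So everything comes down to the rigidity claim: \emph{a triangular arrangement with $n-1$ inner lines through each vertex whose inner triple points form a complete intersection of type $(n-1,n-1)$ is, up to a linear change of coordinates, the monomial arrangement with the same number of sides.}

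To prove this claim I would choose coordinates with $A=(1:0:0)$, $B=(0:1:0)$, $C=(0:0:1)$, which is possible as $A,B,C$ are not collinear. Then the $n-1$ inner lines through $A$, $B$, $C$ are the components of curves $D_A=\{f_A=0\}$, $D_B=\{f_B=0\}$, $D_C=\{f_C=0\}$ with $f_A\in\C[y,z]$, $f_B\in\C[x,z]$, $f_C\in\C[x,y]$ squarefree of degree $n-1$ and divisible by none of $x,y,z$ (an inner line is not a side). Every triple point of $\cA$ lies on exactly one line through each of $A,B,C$, and none of those three lines can be a side --- a non-vertex point of a triangular arrangement lying on a side lies on at most two lines of the arrangement --- so $T\subseteq D_A\cap D_B\cap D_C$. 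Since $|T|=(n-1)^2$ and $D_A,D_B$ share no component, B\'ezout forces $D_A\cap D_B=T$ as a reduced scheme, hence $I_T=(f_A,f_B)$ and $\HH^0(\mathcal{J}_T(n-1))=\langle f_A,f_B\rangle$. As $f_C$ vanishes on $T$ we obtain $f_C=\lambda f_A+\mu f_B$ for some constants $\lambda,\mu$; specialising this identity at $z=0$, at $x=0$ and at $y=0$ shows in turn that $f_C$, $f_A$ and $f_B$ are all \emph{binomials}, and $\lambda\mu\neq0$ since none of them is a pure power. A diagonal change of coordinates then normalises them simultaneously to $y^{n-1}-z^{n-1}$, $x^{n-1}-z^{n-1}$, $x^{n-1}-y^{n-1}$; the one compatibility condition required --- vanishing of the $z^{n-1}$-coefficient of $\lambda f_A+\mu f_B$ --- holds because $f_C$ contains no $z$. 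Thus the inner lines of $\cA$ are those of a monomial arrangement, and together with the sides actually present in $\cA$ (any one, or any two, being equivalent under permutation of $x,y,z$) this identifies $\cA$ up to coordinates with the arrangement asserted in the statement.

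The step I expect to be the real obstacle is the last one: passing from the relation $f_C=\lambda f_A+\mu f_B$ to ``all three forms are binomials and the three rescalings are compatible''. This needs that $D_A\cap D_B$ be a reduced (equivalently transverse) complete intersection equal to $T$, and a careful bookkeeping of the leading and trailing coefficients of $f_A,f_B,f_C$ --- all of which are nonzero precisely because the inner lines avoid the three sides. The rest is a routine transcription of Theorems~\ref{uncomplete} and~\ref{CI} to the symmetric case $a=b=c$, plus Corollary~\ref{reflection-012} for the reverse implication.
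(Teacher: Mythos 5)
Your proposal is correct and follows the same route as the paper, which presents this corollary as a direct consequence of Theorem \ref{uncomplete} specialized to $a=b=c=n$: freeness forces the set $T$ of inner triple points to be a complete intersection of type $(n-1,n-1)$ (with the small exceptional cases of part (3) treated separately). The only step the paper leaves implicit is the identification of such an arrangement with the monomial one, and your rigidity argument for it checks out: $T\subseteq D_A\cap D_B$ together with $|T|=(n-1)^2$ gives $I_T=(f_A,f_B)$, hence $f_C=\lambda f_A+\mu f_B$ with $\lambda\mu\neq 0$ (no $f$ is a pure power of a coordinate), the specializations at $z=0$, $x=0$, $y=0$ force all three forms to be binomials, and the single compatibility condition for the simultaneous diagonal normalization is precisely the vanishing of the $z^{n-1}$-coefficient of $\lambda f_A+\mu f_B$, which holds because $f_C$ involves no $z$.
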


\section{Weak combinatorics}\label{sec-weak}
Being given an arrangement, one can also consider  its  {\it weak combinatorics}.

\begin{defi}
 Let $\cA$ be an arrangement of $n$ lines in $\P^2$. Its weak combinatorics is the data of integers $t_i$ for $ 2\le i \le n$, where $t_i$ is the number of points that are contained in exactly $i$ lines of the arrangement $\cA$.
 \end{defi}

In this section we will show that if we only assume the weak combinatorics hypothesis, the conjecture does not hold. Indeed, we get the following result.

\begin{theo}
 There exist pairs of arrangements possessing the same weak combinatorics such that one is free and the other is not.
\end{theo}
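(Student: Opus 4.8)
The plan is to produce an explicit pair of triangular arrangements $\cA,\cA'\in\mathrm{Tr}(a,b,c)$ having the same weak combinatorics, with $\cA$ free and $\cA'$ not. The preliminary observation is that, away from the three vertices, a triangular arrangement has only double and triple points: two inner lines through a common vertex meet only there, and a line through $A$ together with a line through $B$ cannot be concurrent with a third line through $A$ or through $B$. Hence the weak combinatorics of an arrangement in $\mathrm{Tr}(a,b,c)$ is determined by the triple $(a,b,c)$, which fixes $t_{a+1},t_{b+1},t_{c+1}$, together with the single integer $|T|$, which fixes $t_3$; the number $t_2$ is then forced by $\binom{a+b+c}{2}=\sum_{i\ge2}t_i\binom i2$. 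So it is enough to exhibit $\cA,\cA'\in\mathrm{Tr}(a,b,c)$ with the same number $|T|=m$ of inner triple points, one free and one not.

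First I would pin down $(a,b,c)$ and $m$. One needs $m<(a-1)(b-1)$: if $m=(a-1)(b-1)$ then $T$ must be the entire $(a-1)\times(b-1)$ grid cut by the inner lines through $A$ and through $B$, hence a complete intersection, and by Theorem \ref{CI} every realization is then free. By Remark \ref{chernClass}, fixing $m$ fixes the Chern classes of $\T$, so a free realization must split as $\cO_{\P^2}(-p)\oplus\cO_{\P^2}(-q)$ with $p+q=a+b+c-1$ and $pq=(ab+bc+ca-a-b-c+1)-m$; I would choose $(a,b,c)$ and $m$ so that $p,q$ are positive integers and, crucially, so that the configurations of $m$ inner triple points already fall into more than one combinatorial type. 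This requires the inner pencils to contain enough lines --- the smallest cases such as $\mathrm{Tr}(2,2,2)$ and $\mathrm{Tr}(3,3,3)$ are too rigid, since there $|T|$ alone determines the combinatorial type.

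For the free member $\cA$ the economical route is to realize it as the arrangement left after deleting from a full monomial arrangement $\cA_3^3(N)$, for a suitable $N$, a collection of lines whose union is itself a triangular arrangement with \emph{no} inner triple point, i.e. with $T_{\mathrm{rem}}=\emptyset$; the argument in the proof of Theorem \ref{teo-carfree} then gives $\HH^1_*(\T_\cA)=0$, and Horrocks' criterion \cite{Ho} yields that $\cA$ is free with exponents $(N+1,a+b+c-N-2)=(p,q)$ --- equivalently, one simply invokes Theorem \ref{teo-carfree}. For the non-free member $\cA'$, of the same $|T|=m$, the Chern classes already coincide with those of $\cA$, so the only candidate splitting is $\cO_{\P^2}(-p)\oplus\cO_{\P^2}(-q)$ and it is enough to exclude it. I would place the $m$ inner triple points of $\cA'$ in an unexpectedly special position --- typically clustering several of them on a line $\ell\notin\cA'$, or making a vertex collinear with some of them, an incidence that already distinguishes the combinatorial type of $\cA'$ from that of $\cA$ --- and then certify non-freeness either through the multi-restriction theorem \cite[Theorem 3.1]{WY} on a well-chosen line of $\cA'$ (producing a jumping line of $\T_{\cA'}$, exactly as in the proof of Theorem \ref{teo-carfree} for the non-greedy deletions), or by a direct cohomology count on $0\to\T_{\cA'}\to\cO_{\P^2}(-a)\oplus\cO_{\P^2}(-b)\oplus\cO_{\P^2}(-c)\to\mathcal{J}_{T'}(-1)\to0$ showing $\HH^1_*(\T_{\cA'})\ne0$; in either case $T'$, although of length $m$, imposes dependent conditions where $T$ did not. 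Since $\cA$ and $\cA'$ both lie in $\mathrm{Tr}(a,b,c)$ with the same $|T|$, they have the same weak combinatorics, and the theorem follows.

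The hard part is the last step. For $m<(a-1)(b-1)$ freeness is genuinely not a function of $m$ alone --- this is precisely the phenomenon being exploited --- so one must balance the two competing demands on $(a,b,c,m)$: small enough to keep the splitting-type or cohomology computation feasible, large enough that a non-free realization actually exists. The explicit coordinates provided by Theorem \ref{thm-roots}, which let one realize both $\cA$ and $\cA'$ inside a single $\cA_3^3(N)$ and compute everything with roots of unity, are the natural tool for carrying out that verification.
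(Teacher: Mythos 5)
Your reduction is sound as far as it goes: inner points of a triangular arrangement have multiplicity at most $3$, so the weak combinatorics of a member of $\mathrm{Tr}(a,b,c)$ is indeed determined by $(a,b,c)$ and $|T|$, and it therefore suffices to exhibit two arrangements in the same $\mathrm{Tr}(a,b,c)$ with the same number $m$ of inner triple points, one free and one not. This is exactly the framework of the paper's proof, and your toolkit (greedy deletion from $\cA_3^3(N)$ with $T_{\mathrm{rem}}=\emptyset$ plus Horrocks for the free member, fixed Chern classes forcing a unique candidate splitting, \cite[Theorem 3.1]{WY} or a cohomology count to exclude it) is the right one. But the proof stops precisely where the theorem begins: you never name a quadruple $(a,b,c,m)$, never produce the non-free arrangement $\cA'$, and never verify that a non-free realization exists with the \emph{same} $m$ as a free one. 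That existence is the entire content of the statement --- it is not automatic that the "special position'' you would like to impose on $T'$ is realizable by an actual triangular arrangement with $|T'|=m$, nor that it destroys freeness; both require a concrete configuration and a computation, and you explicitly defer this ("the hard part is the last step''). As written, the argument establishes only a conditional statement, not the theorem.

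For comparison, the paper does the missing work with an explicit pair in $\mathrm{Tr}(5,5,5)$, both with $t_3=12$, $t_6=3$: the arrangement $\cA_0$, obtained from $\cA_3^3(6)$ by deleting six lines (two per vertex), is free with exponents $(7,7)$ because the degree-$2$ syzygy $[(x-z)(x-\zeta z),(y-z)(y-\zeta z),(x-\zeta^2y)(x-\zeta^4y)]$ has empty zero locus (alternatively, one line carries only two inner triple points, so \cite[Theorem 3.1]{WY} gives splitting $(7,7)$ on it and \cite[Corollary 2.12]{EF} concludes); the arrangement $\cA_1$, obtained from $\cA_3^3(5)$ by deleting the three concurrent lines $x=z$, $y=z$, $x=y$, admits the degree-$1$ syzygy $[x-z,y-z,x-y]$ vanishing at $p=(1,1,1)\notin T$, so it is nearly free with jumping point $p$ and cannot split. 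Note also that in the paper's example the free member is \emph{not} in the regime $T_{\mathrm{rem}}=\emptyset$ of your sketch alone --- its freeness is certified by the explicit nowhere-vanishing syzygy --- which illustrates that the verification genuinely has to be carried out on concrete equations (for instance via the roots-of-unity coordinates of Theorem \ref{thm-roots}, as you suggest) rather than asserted.
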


\begin{proof}
 We prove it by describing an example. We will explain next how to produce a family of  examples of the same kind.

We construct two triangular arrangements:  $\cA_0$ which is free with exponents $(7,7)$  and $\cA_1$ which is not free,  both in $\Tr(5,5,5)$ 
with the same numbers of multiple points 
$t_3=12, t_4=t_5=0$, $t_6=3$ and $t_i=0$ for $i>6$ (the number of double points is  given by the combinatorial formula $\binom{15}{2}=t_2+3t_3+\binom{6}{2}t_6$).

\subsubsection*{\it Construction of $\cA_0$} It is obtained by removing the six lines  $x=z$, $x=\zeta z$, $y=z$, $y=\zeta z$, $x=\zeta^2 y$ and $x=\zeta^4 y$
from the full monomial arrangement $xyz(x^6-y^6)(y^6-z^6)(x^6-z^6)=0$ as
represented in the following picture: 

\begin{center}

\definecolor{uququq}{rgb}{0.25,0.25,0.25}
\definecolor{xdxdff}{rgb}{0.49,0.49,1}
\begin{tikzpicture}[line cap=round,line join=round,>=triangle 45,x=0.7cm,y=0.7cm]
\clip(-1,-1) rectangle (6,6);
\draw [line width=1.6pt,dash pattern=on 3pt off 3pt] (0,-1) -- (0,6);
\draw [line width=1.6pt,dash pattern=on 3pt off 3pt] (1,-1) -- (1,6);
\draw (2,-1) -- (2,6);
\draw (3.02,-1) -- (3.02,6);
\draw (3.98,-1) -- (3.98,6);
\draw (5,-1) -- (5,6);
\draw [line width=1.6pt,dash pattern=on 3pt off 3pt,domain=-1:6] plot(\x,{(-0-0*\x)/1});
\draw [line width=1.6pt,dash pattern=on 3pt off 3pt,domain=-1:6] plot(\x,{(--1-0*\x)/1});
\draw [domain=-1:6] plot(\x,{(--2.02-0*\x)/1});
\draw [domain=-1:6] plot(\x,{(--2.94-0*\x)/1});
\draw [domain=-1:6] plot(\x,{(--4-0*\x)/1});
\draw [domain=-1:6] plot(\x,{(--5-0*\x)/1});
\draw [line width=1.6pt,dash pattern=on 3pt off 3pt,domain=-1:6] plot(\x,{(--4--1*\x)/1});
\draw [line width=1.6pt,dash pattern=on 3pt off 3pt,domain=-1:6] plot(\x,{(-2--1*\x)/1});
\draw [line width=1.6pt,dash pattern=on 3pt off 3pt,domain=-1:6] plot(\x,{(-4--1*\x)/1});
\draw [line width=1.6pt,dash pattern=on 3pt off 3pt,domain=-1:6] plot(\x,{(--2--1*\x)/1});
\begin{scriptsize}
\fill [color=uququq](2,2.02) circle (1.5pt);
\fill [color=uququq] (2,2.94) circle (1.5pt);
\fill [color=uququq](2,5) circle (1.5pt);
\fill [color=uququq] (3.02,2.94) circle (1.5pt);
\fill [color=uququq] (3.02,4) circle (1.5pt);
\fill [color=uququq] (3.98,5) circle (1.5pt);
\fill [color=uququq](3.98,3.98) circle (1.5pt);
\fill [color=uququq] (3.98,2.94) circle (1.5pt);
\fill [color=uququq] (5,4) circle (1.5pt);
\fill [color=uququq] (5,2.02) circle (1.5pt);
\fill [color=uququq] (3.02,2.02) circle (1.5pt);
\end{scriptsize}
\end{tikzpicture}

\end{center}
This arrangement is free because the syzygy of degree $2$, that is, 
$$\phi=[(x-z)(x-\zeta z), (y-z)(y-\zeta z),(x-\zeta^2 y)(x-\zeta^4 y) ],$$ has no zero. 
Indeed,  for
$$\xymatrix@C=16pt{
  0\ar[r]&\T_{\cA_0}\ar[r]&  \cO^3_{\P^2}(-5) \ar[r]^-{\psi}&  \mathcal{I}_T(-1)\ar[r]&0}
$$
this syzygy gives a non-zero section $\cO_{\P^2}(-7) \to \T_{\cA_0}$, where 
$$\psi=\left[\frac{x^6-z^6}{(x-z)(x-\zeta z)}, \frac{y^6-z^6}{(y-z)(y-\zeta z)},\frac{x^6-y^6}{(x-\zeta^2 y)(x-\zeta^4 y)} \right].$$
This induces a commutative diagram
$$
\xymatrix{
 &  \cO_{\P^2}(-7) \ar[r]^{\simeq} \ar[d] & \cO_{\P^2}(-7)  \ar[d]\\
 0 \ar[r] & \T_{\cA_0} \ar[d]  \ar[r]& \cO^3_{\P^2}(-5) \ar[r] \ar[d] & \mathcal{I}_T(-1) \ar[r] \ar[d] & 0 \\
 0\ar[r] & \mathcal{I}_{\Gamma}(-7)   \ar[r] & \mathcal{F} \ar[r] & \mathcal{I}_{T}(-1) \ar[r] & 0\rlap{,} 
}
$$
where the singular locus of the rank $2$ sheaf $\mathcal{F}$ is the zero set of $\psi$. Since this zero set is empty, $\mathcal{F}$
is a vector bundle. That proves that $\mathcal{E}xt^1(\mathcal{F},\cO_{\P^2})=0$ and then $\Gamma=\emptyset$.

Another argument can be used to establish the freeness: the $12$ inner triple points are distributed as a partition $3+3+3+3$ along the vertical lines and $3+3+3+3$ along the horizontal lines but $2+3+4+3$ along the diagonal (this means that this example is very close to having the same combinatorics as the non-free case: indeed, in the following example we will see that the partition along vertical, horizontal and diagonal lines is always $3+3+3+3$). Thanks to \cite[Case 2.2]{WY}, the bundle restricted to the line containing only two inner triple points has the splitting $(7,7)$. Indeed, along the chosen diagonal line, the others cut out eight points with multiplicities $(5,2,2,1,1,1,1,1)$. The obtained splitting type implies the freeness according to \cite[Corollary 2.12]{EF}.

\subsubsection*{\it Construction of $\cA_1$}
It is obtained by removing the three lines  $x=z$,  $y=z$, $x=y$
from the full monomial arrangement  $xyz(x^5-y^5)(y^5-z^5)(x^5-z^5)=0$ as it appears in the following figure: 

\begin{center}
 \begin{tikzpicture}[line cap=round,line join=round,>=triangle 45,x=1.0cm,y=1.0cm]
 \definecolor{uququq}{rgb}{0.25,0.25,0.25}
\definecolor{qqqqff}{rgb}{0,0,1}
\clip(0.3,-1.2) rectangle (5.5,5);
\draw [domain=0.3:5.5] plot(\x,{(--1-0*\x)/1});
\draw [domain=0.3:5.5] plot(\x,{(--2-0*\x)/1});
\draw (5,-1.2) -- (5,5);
\draw [domain=0.3:5.5] plot(\x,{(--3-0*\x)/1});
\draw (2,-1.2) -- (2,5);
\draw (3,-1.2) -- (3,5);
\draw (4,-1.2) -- (4,5);
\draw [line width=1.6pt,dash pattern=on 3pt off 3pt] (1,-1.2) -- (1,5);
\draw [domain=0.3:5.5] plot(\x,{(--4-0*\x)/1});
\draw [line width=1.6pt,dash pattern=on 3pt off 3pt,domain=0.3:5.5] plot(\x,{(-0-0*\x)/1});
\draw [domain=0.3:5.5] plot(\x,{(-10-0*\x)/-5});
\draw [line width=1.6pt,dash pattern=on 3pt off 3pt,domain=0.3:5.5] plot(\x,{(-4--4*\x)/4});
\begin{scriptsize}
\fill [color=uququq] (3,1) circle (1.5pt);
\fill [color=uququq] (4,2) circle (1.5pt);
\fill [color=uququq] (4,1) circle (1.5pt);
\fill [color=uququq] (5,1) circle (1.5pt);
\fill [color=uququq] (5,2) circle (1.5pt);
\fill [color=uququq] (2,2) circle (1.5pt);
\fill [color=uququq] (2,3) circle (1.5pt);
\fill [color=uququq] (3,3) circle (1.5pt);
\fill [color=uququq] (5,3) circle (1.5pt);
\fill [color=uququq] (2,4) circle (1.5pt);
\fill [color=uququq] (3,4) circle (1.5pt);
\fill [color=uququq] (4,4) circle (1.5pt);
\end{scriptsize}
\end{tikzpicture}
\end{center}

Beginning with $\cA_3^3(5)$, which is free with exponents $(6,11)$, and removing the first line, we obtain, by the Addition-Deletion theorem, a free bundle with exponents $(6,10)$. Removing the second line, we  again find a free arrangement, with exponents $(6,9)$.  Removing the third line, we do not find a free bundle (with splitting $(7,7)$) but find a \textit{nearly free} bundle (defined in \cite{DS} and studied by the authors in \cite{MV}) with generic splitting $(6,8)$. The jumping point is the intersection point of the three removed lines. The three partitions appearing along the horizontal, vertical and diagonal lines are $12=3+3+3+3$.

Let us make it more explicit.
We found a  syzygy of degree $1$, which is 
$$\phi=[x-z, y-z,x-y],$$ 
and which induces a non-zero section $\cO_{\P^2}(-6) \to \T$, where
$$\xymatrix@C=16pt{
  0\ar[r]&\T_{\cA_1} \ar[r]& \cO^3_{\P^2}(-5) \ar[r]^-{\psi}&  \mathcal{I}_T(-1) \ar[r]&0}
$$
 and
$$\psi=\left[\frac{x^5-z^5}{x-z}, \frac{y^5-z^5}{y-z},\frac{x^5-y^5}{x-y} \right].$$
This syzygy admits a common zero $p=(1:1:1)$ and induces a commutative diagram
$$
\xymatrix{
 &  \cO_{\P^2}(-6) \ar[r]^{\simeq} \ar[d] & \cO_{\P^2}(-6)  \ar[d]\\
 0 \ar[r] & \T_{\cA_1} \ar[d]  \ar[r]& \cO^3_{\P^2}(-5) \ar[r] \ar[d] & \mathcal{I}_T(-1) \ar[r] \ar[d] & 0 \\
 0\ar[r] & \mathcal{I}_{\Gamma}(-8)   \ar[r] & \mathcal{F} \ar[r] & \mathcal{I}_{T}(-1) \ar[r] & 0\rlap{,} 
}
$$
where the singular locus of the rank $2$ sheaf $\mathcal{F}$ is the zero set of $p$. Since $p\notin T$, we then have $p\in \Gamma$ (actually, $\Gamma=\{p\}$),  
and $\T_{\cA_1}$ cannot be free.

This example proves that the arrangement consisting of these $15$ lines is nearly free with the same weak combinatorics (same numbers $t_2,t_3, \ldots$) and almost the same combinatorics (only one partition along the diagonals differs) as the one described just before.  This shows that we cannot replace the term combinatorics with weak combinatorics in the hypothesis of Terao's conjecture.
\end{proof}

\begin{remark}
 In the famous Ziegler's example of two arrangements (nine lines with six triple points) with the same combinatorics but with different free resolutions, the situation was explained by the existence of a smooth conic containing the six triple points.  Here the situation can be geometrically explained by the existence of a cubic containing the $12$ inner triple points.  Indeed, since the bundle $\T_{\cA_1}$ described in the previous example is the kernel of the  exact sequence
$$\xymatrix@C=16pt{
   0\ar[r]&\T_{\cA_1}\ar[r]& \cO_{\P^2}^3(-5) \ar[r]&  \mathcal{I}_T(-1) \ar[r]&0,}
$$
(where $|T|=12$) it gives  $\HH^0(\mathcal{I}_T(3))=\HH^1(\T_{\cA_1}(4))$.
 Moreover,  the following non-zero global section 
$$\xymatrix@C=16pt{
   0\ar[r]&\cO_{\P^2}(-6)\ar[r]&  \T_{\cA_1} \ar[r]&  \mathcal{I}_p(-8) \ar[r]&0,}
 $$
    where $p$ is the jumping point associated to the nearly free arrangement, proves that 
    $$\hh^1(\T_{\cA_1}(4))=\hh^1(\mathcal{I}_p(-4))=\hh^0(\cO_p)=1.$$
\end{remark}

\begin{remark} 
It is possible to generalize the described examples and find a family of them in the following way: consider triangular arrangements in the family $\Tr(n,n,n)$. The multiplicity of each vertex is $n+1$.  Assume that $n=2k+1$. For arrangements in this family, the maximal possible number of inner triple points is $|T|=4k^2$ (then the arrangement is $\cA_3^3(n-1)$). For a general triangular arrangement, the set of inner triple points $T$ is empty, but if we want to consider free arrangements, $T$ must contain at least $3k^2$ points.  This minimal number corresponds to the balanced free arrangement with exponents $(3k+1,3k+1)$. We construct a nearly free arrangement with generic splitting $\cO_l(-3k)\oplus \cO_l(-2-3k)$ and $3k^2$ triple inner points in the same family $\Tr(n,n,n)$.  This generic splitting is also that of the free arrangement with $3k^2+1$ triple inner points constructed by removing $k-1$ (inner) lines from each vertex of the arrangement
$xyz(x^{3k-1}-y^{3k-1})(y^{3k-1}-z^{3k-1})(x^{3k-1}-z^{3k-1})=0$. In the last step, instead of removing a line with $k-1$ triple inner points, we remove a line with $k$ triple points.  This is always possible by choosing a line of the third direction passing through an intersection point $\{p\}$ of the previous removed lines in the two other directions. This construction induces an exact sequence
$$\xymatrix@C=16pt{
  0\ar[r]& \cO_{\P^2}(-3k)\ar[r]& \TA \ar[r]&  \mathcal{I}_p(-2-3k)\ar[r]& 0,}
$$
where, once again, $p$ is the point associated to the nearly free arrangement. 
\end{remark}

\section{Some final questions}\label{sec-Terao}

In this final section we propose some questions that would extend the study of RUAs in the more general setting of triangular arrangements. Such inquiries arise naturally from the study of large families of examples and, in our experience, should have a positive answer.

\begin{question}
Let $\cA$ be a Roots-of-Unity-Arrangement, obtained by deleting $a', b'$ and $c'$ lines from, respectively, the first, second and third family of inner lines of a full monomial arrangement. Consider a succession of RUAs
$$
\cA_0 \supset \cA_1\supset \cdots \supset \cA = \cA_{c'} \supset \cdots \supset \cA_{N-1} \supset \cA_{N},
$$
where $\cA_j$ denotes a RUA obtained from the full monomial arrangement by deleting the same $a'$ and $b'$ from the first and second family, respectively, and $j$ lines from the third one, which, depending on how many we delete, are a subset of or contain the $c'$ lines deleted to obtain $\cA$.

Suppose $\cA$ to be free. Does this imply that it possible to choose properly the order of the deleted lines to get $\cA_j$ free for any $1\leq j \leq c'-1$ or $c'+ 1\leq j \leq N-1 $? 
\end{question}

Notice that $\cA_0$ and $\cA_N$ are free arrangements; hence a  positive answer to the previous question would imply that RUAs are \textit{inductively free}, \textit{i.e.}~free at each step.

Afterwards, it is natural as well to determine if freeness is preserved when passing from a triangular arrangement to any RUA with the same intersection lattice and \textit{vice versa}. Recall that we have infinite choices  because, given a triangular arrangement, the proof of Theorem~\ref{thm-roots} implies infinitely many associated RUAs. Observe that this would give further evidence towards Terao's conjecture, as it is a special case.

\begin{question}
Let $\cA$ be a triangular arrangement and $\mathcal{B}$ a RUA with the same intersection lattice as $\cA$. Is $\cA$ free if and only if $\mathcal{B}$ is free as well?
\end{question}

\end{document}